\providecommand{\U}[1]{\protect\rule{.1in}{.1in}}
\newtheorem{theorem}{Theorem}
\newtheorem{lemma}[theorem]{Lemma}
\newtheorem{proposition}[theorem]{Proposition}
\newtheorem{remark}[theorem]{Remark}
\newenvironment{proof}[1][Proof]{\noindent\textbf{#1.} }{\ \rule{0.5em}{0.5em}}
\begin{document}

\title{Linear functionals and $\Delta$- coherent pairs of the second kind}
\author{Diego Dominici \thanks{Research Institute for Symbolic Computation, Johannes
Kepler University Linz, Altenberger Stra\ss e 69, 4040 Linz, Austria. e-mail:
ddominic@risc.uni-linz.ac.at} \thanks{Department of Mathematics, State
University of New York at New Paltz, 1 Hawk Dr., New Paltz, NY 12561-2443,
USA.}
\and Francisco Marcell{\'a}n \thanks{Departamento de Matem\'aticas, Universidad
Carlos III de Madrid, Escuela Polit\'ecnica Superior, Av. Universidad 30,
28911 Legan\'es, Spain. e-mail: pacomarc@ing.uc3m.es} }
\maketitle

\begin{abstract}
We classify all the \emph{$\Delta$-}coherent pairs of measures of the second
kind on the real line. We obtain $5$ cases, corresponding to all the families
of discrete semiclassical orthogonal polynomials of class $s\leq1.$

\end{abstract}

\emph{Keywords}: Discrete orthogonal polynomials, discrete semiclassical
functionals, discrete Sobolev inner products, coherent pairs of discrete
measures, coherent pairs of second kind for discrete measures.

\emph{2020 Mathematical Subject Classification}: 42C05 (primary), 33C45, 46E39 (secondary).

\section{Introduction}

The aim of this contribution is to provide a characterization of all pairs of
discrete measures $\{\rho_{0},\rho_{1}\}$ supported on the real line such that
the corresponding sequences of \emph{monic orthogonal polynomials} (MOPs for
short) $\{P_{n}(\rho_{0};x)\}_{n\geq0}$ and $\{P_{n}(\rho_{1};x)\}_{n\geq0}$
satisfy
\begin{equation}
P_{n}(\rho_{1};x)-\tau_{n}P_{n-1}(\rho_{1};x)=\frac{1}{n+1}\Delta P_{n+1}%
(\rho_{0};x),\quad n\geq1, \label{Eq-CPSK-D}%
\end{equation}
where $\tau_{n}\neq0$ for $n\geq1$. We will solve this problem by dealing with
a more general problem concerning the characterization of pairs of
quasi-definite linear functionals (with complex moments) such that the
corresponding sequences of MOPs satisfy (\ref{Eq-CPSK-D}).\newline

As we show in this contribution, we get special pairs of linear functionals
(in particular, positive definite linear functionals associated with positive
measures supported on infinite subsets of the real line) and the corresponding
associated sequences of orthogonal polynomials have interesting properties.
These pairs are \emph{discrete semiclassical linear functionals} of class at
most $s=1$ which have been studied in \cite{MR3227440} and \cite{MR4238531} in
the framework of a classification problem based on a hierarchy
structure.\newline

On the other hand, these sequences of orthogonal polynomials are related to
some problems in approximation theory. More precisely, the analysis of Fourier
expansions in terms of sequences of polynomials orthogonal with respect to the
Sobolev inner product
\begin{equation}
\langle f,g\rangle_{\mathfrak{S}}=\sum_{x=0}^{\infty}f(x)g(x)\rho
_{0}(x)+\lambda\sum_{x=0}^{\infty}\Delta f(x)\Delta g(x)\rho_{1}(x),
\label{Eq-S-InnerProduct}%
\end{equation}
defined by the pair of discrete measures $\{\rho_{0},\rho_{1}\}$ of class
$s\geq1$. It turns out that the monic Sobolev orthogonal polynomials
$S_{n}(\rho_{0},\rho_{1};x)$ with respect to the inner product $\langle
\,,\,\rangle_{\mathfrak{S}}$ satisfy the connection formulas
\begin{equation}%
\begin{array}
[c]{l}%
S_{n+1}(\rho_{0},\rho_{1};x)-\gamma_{n}S_{n}(\rho_{0},\rho_{1};x)=P_{n+1}%
(\rho_{0};x),\\[1.5ex]%
\Delta S_{n+1}(\rho_{0},\rho_{1};x)-\gamma_{n}\Delta S_{n}(\rho_{0},\rho
_{1};x)=(n+1)\left[  P_{n}(\rho_{1};x)-\tau_{n}P_{n-1}(\rho_{1};x)\right]  ,
\end{array}
\quad n\geq1, \label{Eq-DSK-SobolevOrth-to-NormalOrth}%
\end{equation}
with $S_{1}(\rho_{0},\rho_{1};x)=P_{1}(\rho_{0};x)$. The above connection
formulas between $\{S_{n}(\rho_{0},\rho_{1};x)\}_{n\geq0}$ and the sequences
of MOPs $\{P_{n}(\rho_{0};x)\}_{n\geq0}$ and $\{P_{n}(\rho_{1};x)\}_{n\geq0}$
yield a nice approach to the study of algebraic and analytic properties of the
polynomials $S_{n}(\rho_{0},\rho_{1};x)$.\newline

Based on historical information given below we will refer to pairs of measures
$\{\rho_{0},\rho_{1}\}$ satisfying property (\ref{Eq-CPSK-D}) as
\emph{$\Delta$-coherent pairs of measures of the second kind on the real
line}. The concept of coherence between a pair of probability measures
$\{\nu_{0},\nu_{1}\}$ supported on the real line was introduced in
\cite{MR1104157}. Indeed, a pair of probability measures supported on the real
line is said to be a coherent pair if the corresponding sequences of MOPs
$\{P_{n}(\nu_{0};x)\}_{n\geq0}$ and $\{P_{n}(\nu_{1};x)\}_{n\geq0}$, satisfy
\begin{equation}
P_{n}(\nu_{1};x)=\frac{1}{n+1}\left[  P_{n+1}^{\prime}(\nu_{0};x)-\rho
_{n}P_{n}^{\prime}(\nu_{0};x)\right]  ,\quad\rho_{n}\neq0,\quad n\geq1.
\label{Eq-CP}%
\end{equation}
It was shown in this case that the sequence of monic orthogonal polynomials
with respect to the inner product
\begin{equation}
\langle f,g\rangle_{S}=\int f(x)g(x)d\nu_{0}+\lambda\int f^{\prime
}(x)g^{\prime}(x)d\nu_{1}, \label{Eq-Sobolev}%
\end{equation}
satisfies the connection formulas
\begin{equation}%
\begin{array}
[c]{l}%
S_{n+1}(\nu_{0},\nu_{1};x)-\gamma_{n}S_{n}(\nu_{0},\nu_{1};x)=P_{n+1}(\nu
_{0};x)-\rho_{n}P_{n}(\nu_{0};x),\\[1.5ex]%
S_{n+1}^{\prime}(\nu_{0},\nu_{1};x)-\gamma_{n}S_{n}^{\prime}(\nu_{0},\nu
_{1};x)=(n+1)P_{n}(\nu_{1};x),
\end{array}
\quad n\geq1. \label{Eq-SobolevOrth-to-NormalOrth}%
\end{equation}
These formulas are very useful in the study of analytic properties of the
corresponding Sobolev orthogonal polynomials. For more information about
polynomials orthogonal with respect to Sobolev inner products see the updated
survey \cite{MR3360352}.\newline

The motivation in \cite{MR1104157} for introducing such pairs of measures was
their applications in the framework of Fourier expansions of functions with
respect to the Sobolev inner product $\langle\,,\,\rangle_{\mathfrak{S}}$. A
particular case of such Fourier series expansions based on Legendre-Sobolev
orthogonal polynomials had already been considered in \cite{MR1071973}, where
some numerical tests comparing these Legendre-Sobolev Fourier series
expansions and the ordinary Legendre Fourier series expansions are
presented.\newline

The pairs of probability measures supported on the real line with the property
that the corresponding sequences of MOPs satisfy (\ref{Eq-CP}) were completely
determined in 1997 by H. G. Meijer \cite{MR1451509}. He showed that if
$(\nu_{0},\nu_{1})$ is a coherent pair of measures on the real line, then one
of the measures must be classical (either Jacobi or Laguerre) and the other
one a rational perturbation of it. Note that the condition for classical
linear functionals to be part of a coherent pair was deduced in
\cite{MR1351148}.

Thus, what was proved in \cite{MR1451509} is more general than what is stated
above. The starting point of the classification in \cite{MR1451509} are
certain functional relations established in \cite{MR1343524} with respect to
pairs of quasi-definite linear functionals such that the corresponding
sequences of MOPs satisfy a relation like (\ref{Eq-CP}). Note that if the
coherence property (\ref{Eq-CP}) holds, then you get the connection formulas
in (\ref{Eq-SobolevOrth-to-NormalOrth}). According to \cite{MR1451509},
(\ref{Eq-SobolevOrth-to-NormalOrth}) holds when one of the measures in
$\{\nu_{0},\nu_{1}\}$ is a classical one. \newline

An extension of the concept of coherence, known in the literature as
$(1,1)$-coherence, is defined in terms of the corresponding MOPs as follows
(see \cite{MR2143522})
\begin{equation}
P_{n}(\nu_{1};x)-\tau_{n}P_{n-1}(\nu_{1};x)=\frac{1}{n+1}\left[
P_{n+1}^{\prime}(\nu_{0};x)-\xi_{n}P_{n}^{\prime}(\nu_{0};x)\right]  ,\quad
\xi_{n},\tau_{n}\neq0,\quad n\geq1,\ \ \label{Eq-CP11}%
\end{equation}

In this case, the Sobolev orthogonal polynomials associated with the inner
product $\langle\,,\,\rangle_{\mathfrak{S}}$ in (\ref{Eq-Sobolev}) satisfy the
connection formulas
\begin{equation}%
\begin{array}
[c]{l}%
S_{n+1}(\nu_{0},\nu_{1};x)-\gamma_{n}S_{n}(\nu_{0},\nu_{1};x)=P_{n+1}(\nu
_{0};x)-\xi_{n}P_{n}(\nu_{0};x),\\[1.5ex]%
S_{n+1}^{\prime}(\nu_{0},\nu_{1};x)-\gamma_{n}S_{n}^{\prime}(\nu_{0},\nu
_{1};x)=(n+1)\left[  P_{n}(\nu_{1};x)-\tau_{n}P_{n-1}(\nu_{1};x)\right]  ,
\end{array}
\quad n\geq1. \label{Eq-11-SobolevOrth-to-NormalOrth}%
\end{equation}
We would like to emphasize that the characterization of probability measures
satisfying the coherence property (\ref{Eq-CP11}) is given in \cite{MR2143522}
assuming that $\xi_{n},\tau_{n}\neq0$ for $n\geq1$. Therein you have
semiclassical linear functionals of class at most $1$. Its role in the study
of the sequences of orthogonal polynomials with respect to the Sobolev inner
product defined by a $(1,1)$ coherent pairs of measures has been emphasized in
\cite{MR4174712}. Notice that when $\tau_{n}=0$ and $\xi_{n}\neq0$ for
$n\geq1,$ we get the results on coherent pairs presented in \cite{MR1451509}.
Thus, a natural question is to analyze the case $\xi_{n}=0,$and $\tau_{n}%
\neq0$ for $n\geq0.$\newline

In \cite{Paco}, the concept of \emph{coherent pair of second kind} is
introduced. A pair of probability measures $\{\nu_{0},\nu_{1}\}$ supported on
the real line is said to be a coherent pair of second kind if the
corresponding sequences of MOPs $\{P_{n}(\nu_{0};x)\}_{n\geq0}$ and
$\{P_{n}(\nu_{1};x)\}_{n\geq0}$ satisfy%

\begin{equation}
\frac{1}{n+1}P_{n+1}^{\prime}(\nu_{0};x)=P_{n}(\nu_{1};x)-\tau_{n}P_{n-1}%
(\nu_{1};x)\quad n\geq1, \label{Eq-CPSK}%
\end{equation}
where $\tau_{n}\neq0$ for $n\geq1$. The characterization of all pairs of
probability measures as well as the pairs of quasi-definite linear functionals
which are coherent pairs of second kind has been given in \cite{Paco}. Some
illustrative examples were shown therein.\newline

The concept of $\Delta$-coherent pair of linear functionals was introduced in
\cite{MR1785535} as well as in \cite{MR1949214}. Indeed, a pair of linear
functionals $\{L_{0},L_{1}\}$ it is said to be a $\Delta$-coherent pair if the
corresponding sequences of MOPs $\{P_{n}^{(0)}(x)\}_{n\geq0}$ and
$\{P_{n}^{(1)}(x)\}_{n\geq0}$ satisfy a discrete version of (\ref{Eq-CP})%

\begin{equation}
P_{n}^{(1)}(x)=\frac{1}{n+1}\left[  \Delta P_{n+1}^{(0)}(x)-\rho_{n}\Delta
P_{n}^{(0)}(x)\right]  ,\quad\rho_{n}\neq0,\quad n\geq1. \label{Eq-DCPLF}%
\end{equation}
It was shown in this case that the sequence of monic polynomials $S_{n}(x)$
orthogonal with respect to the inner product%

\[
\langle f,g\rangle_{\mathfrak{S}}=\langle L_{0},fg\rangle+\langle L_{1},\Delta
f\Delta g\rangle
\]
satisfies the connection formulas
\begin{equation}%
\begin{array}
[c]{l}%
S_{n+1}(x)-\gamma_{n}S_{n}(x)=P_{n+1}^{(0)}(x)-\rho_{n}P_{n}^{(0)}%
(x),\\[1.5ex]%
\Delta S_{n+1}(x)-\gamma_{n}\Delta S_{n}(x)=(n+1)P_{n}^{(1)}(x),
\end{array}
\quad n\geq1. \label{Eq-DSobolevOrth-to-NormalOrth}%
\end{equation}
In \cite{MR1949214}, it was proved that one of them must be a $\Delta
$-classical linear functional. The corresponding companion linear functionals
were described therein. Notice that this inner product is a discretization of
a Sobolev inner product (\ref{Eq-Sobolev}), i.e., $\nu_{0}$ and $\nu_{1}$ are
discrete measures.\newline

The paper is organized as follows. Section 2 summarizes the basic concepts
about linear functionals and orthogonal polynomials to be used in the sequel.
We emphasize a special family of linear functionals, the so called $\Delta
$\emph{-semiclassical}, which will play a central role along the manuscript.
In Section 3 we introduce the concept of $\Delta$-coherent pair of the second
kind for linear functionals and we give a characterization of them. We must
point out that both functionals in the pair turn out to be discrete
semiclassical functionals of class at most $1$. In Section 4 we classify all
the $\Delta$- coherent pairs of the second kind. We show that the companions
of the discrete classical functionals (Charlier, Kravchuk, Meixner, and Hahn)
are discrete semiclassical of class $s=1$.

\section{Preliminary material}

Let $\mathbb{P}=\mathbb{C}\left[  x\right]  $ and $\mathbb{N}_{0}$ be the set
of nonnegative integers%
\[
\mathbb{N}_{0}=\mathbb{N}\cup\left\{  0\right\}  =\left\{  0,1,2,\ldots
\right\}  .
\]
We will denote by $\delta_{k,n}$ the \emph{Kronecker delta}, defined by%
\[
\delta_{k,n}=\left\{
\begin{array}
[c]{c}%
1,\quad k=n\\
0,\quad k\neq n
\end{array}
\right.  ,\quad k,n\in\mathbb{N}_{0},
\]
and say that $\left\{  \Lambda_{n}\left(  x\right)  \right\}  _{n\geq0}%
\subset\mathbb{P}$ is a \emph{basis }of $\mathbb{P}$ if $\deg\left(
\Lambda_{n}\right)  =n.$

Suppose that $L:\mathbb{P}\rightarrow\mathbb{C}$ is a \emph{linear functional,
}$\left\{  \Lambda_{n}\left(  x\right)  \right\}  _{n\geq0}$ is a basis of
$\mathbb{P}$, and we choose a \textbf{nonzero} sequence $\left\{
h_{n}\right\}  _{n\geq0}\subset\mathbb{C}$. If the system of linear equations%
\begin{equation}%
{\displaystyle\sum\limits_{i=0}^{n}}
L\left[  \Lambda_{k}\Lambda_{i}\right]  c_{n,i}=h_{n}\delta_{k,n},\quad0\leq
k\leq n, \label{system}%
\end{equation}
has a \textbf{unique solution} $\left\{  c_{n,i}\right\}  _{0\leq i\leq n},$
then we can define a polynomial $P_{n}\left(  x\right)  $ by%

\[
P_{n}\left(  x\right)  =%
{\displaystyle\sum\limits_{i=0}^{n}}
c_{n,i}\Lambda_{i}\left(  x\right)  ,\quad n\in\mathbb{N}_{0}.
\]
We say that $\left\{  P_{n}\left(  x\right)  \right\}  _{n\geq0}$ is an
\emph{orthogonal polynomial sequence} with respect to the functional $L$. The
system (\ref{system}) can be written as
\[
L\left[  \Lambda_{k}P_{n}\right]  =h_{n}\delta_{k,n},\quad0\leq k\leq n,
\]
and using linearity we see that the sequence $\left\{  P_{n}\left(  x\right)
\right\}  _{n\geq0}$ satisfies the \emph{orthogonality conditions}%
\begin{equation}
L\left[  P_{k}P_{n}\right]  =h_{n}\delta_{k,n},\quad k,n\in\mathbb{N}_{0}.
\label{ortho}%
\end{equation}
The linear functional $L$ is said to be \emph{quasi-definite} \cite{MR0481884}.

We define the multiplication of a linear functional by a polynomial $q(x)$ to
be the linear functional $qL$ such that%
\begin{equation}
\left(  qL\right)  \left[  p\right]  =L\left[  qp\right]  ,\quad
p\in\mathbb{P}, \label{mult}%
\end{equation}
and the adjoint operator $\Delta^{\ast}$ of a linear functional by%
\begin{equation}
\left(  \Delta^{\ast}L\right)  \left[  p\right]  =-L\left[  \Delta p\right]
,\quad p\in\mathbb{P}. \label{adj}%
\end{equation}
We say that $L$ is a \emph{semiclassical functional} with respect to the
operator $\Delta,$ ($\Delta-$semiclassical, in short) \cite{MR1665164} if
there exist polynomials $\left(  \phi,\psi\right)  $ such that $L$ satisfies
the \emph{Pearson equation}%
\begin{equation}
\Delta^{\ast}\left(  \phi L\right)  +\psi L=0, \label{Pearson}%
\end{equation}
where $\phi$ is monic and $\deg\left(  \psi\right)  \geq1.$ Note that using
(\ref{mult}) and (\ref{adj}) we can rewrite the Pearson equation as
\begin{equation}
L\left[  \phi\Delta p\right]  =L\left[  \psi p\right]  ,\quad p\in
\mathbb{P}.\quad\label{Pearson1}%
\end{equation}

\begin{lemma}
If $f,g:\mathbb{Z}\rightarrow\mathbb{C},$ then we have the \emph{summation by
parts} formula%
\begin{equation}%
{\displaystyle\sum\limits_{x=a}^{b}}
f\left(  x\right)  \Delta g\left(  x\right)  =\left[  f\left(  x-1\right)
g\left(  x\right)  \right]  _{a}^{b+1}-%
{\displaystyle\sum\limits_{x=a}^{b}}
g\left(  x\right)  \nabla f\left(  x\right)  ,\quad a,b\in\mathbb{Z}.
\label{SBP}%
\end{equation}

\end{lemma}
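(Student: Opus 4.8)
The plan is to reduce the identity to the telescoping property of $\Delta$ together with a discrete Leibniz rule, so that it becomes a genuine analogue of integration by parts. Recall that $\Delta h(x)=h(x+1)-h(x)$ and $\nabla h(x)=h(x)-h(x-1)$, and that for any $h:\mathbb{Z}\rightarrow\mathbb{C}$ the sum telescopes,
\[
\sum_{x=a}^{b}\Delta h(x)=h(b+1)-h(a)=\left[\,h(x)\,\right]_{a}^{b+1}.
\]
First I would verify the pointwise product rule
\[
\Delta\!\left[\,f(x-1)g(x)\,\right]=f(x)\,\Delta g(x)+g(x)\,\nabla f(x),
\]
which follows by expanding the left-hand side as $f(x)g(x+1)-f(x-1)g(x)$ and adding and subtracting $f(x)g(x)$: one has $f(x)g(x+1)-f(x)g(x)=f(x)\Delta g(x)$ and $f(x)g(x)-f(x-1)g(x)=g(x)\nabla f(x)$.

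Next I would sum this identity over $x$ from $a$ to $b$ and apply the telescoping formula with $h(x)=f(x-1)g(x)$, obtaining
\[
\left[\,f(x-1)g(x)\,\right]_{a}^{b+1}=\sum_{x=a}^{b}f(x)\,\Delta g(x)+\sum_{x=a}^{b}g(x)\,\nabla f(x),
\]
and rearranging the two sums yields exactly (\ref{SBP}). An alternative, equally short route avoids invoking the product rule: expand $\sum_{x=a}^{b}f(x)\Delta g(x)=\sum_{x=a}^{b}f(x)g(x+1)-\sum_{x=a}^{b}f(x)g(x)$, re-index the first sum by $x\mapsto x-1$ to get $\sum_{x=a+1}^{b+1}f(x-1)g(x)$, peel off the boundary terms $f(b)g(b+1)$ and $-f(a-1)g(a)$, collect the interior against $-\sum_{x=a+1}^{b}g(x)\nabla f(x)$, and restore the $x=a$ term on both sides to reach the stated range $\sum_{x=a}^{b}g(x)\nabla f(x)$.

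There is essentially no obstacle here: the computation is routine, and the only point demanding care is the bookkeeping of the index shift (encoded in the choice $h(x)=f(x-1)g(x)$ and the appearance of $x-1$ inside the boundary term) together with the precise summation limits, so that the $\nabla$-sum comes out over $a\le x\le b$ rather than $a+1\le x\le b$. I would present the product-rule version, since it makes the parallel with classical integration by parts transparent and keeps the verification to a single line.
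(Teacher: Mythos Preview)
Your proof is correct and is essentially the same as the paper's: the paper also shows that $f(x)\Delta g(x)+g(x)\nabla f(x)=f(x)g(x+1)-g(x)f(x-1)$ and observes that this expression telescopes to $f(b)g(b+1)-g(a)f(a-1)$. You simply make the telescoping function $h(x)=f(x-1)g(x)$ explicit, which is a cosmetic difference.
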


\begin{proof}
The formula follows from the telescoping sum%
\begin{align*}%
{\displaystyle\sum\limits_{x=a}^{b}}
\left[  f\left(  x\right)  \Delta g\left(  x\right)  +g\left(  x\right)
\nabla f\left(  x\right)  \right]   &  =%
{\displaystyle\sum\limits_{x=a}^{b}}
f\left(  x\right)  g\left(  x+1\right)  -g\left(  x\right)  f\left(
x-1\right) \\
&  =f\left(  b\right)  g\left(  b+1\right)  -g\left(  a\right)  f\left(
a-1\right)  .
\end{align*}

\end{proof}

Using summation by parts, we can write an alternative form of the Pearson
equation (\ref{Pearson}).

\begin{proposition}
\label{Prop1}Let $L\in P^{\ast}$ be defined by%
\[
L\left[  p\right]  =%
{\displaystyle\sum\limits_{x=0}^{\infty}}
p\left(  x\right)  \rho\left(  x\right)  ,\quad p\in\mathbb{P},
\]
where $\rho\left(  -1\right)  =0$ and%
\[
\underset{x\rightarrow\infty}{\lim}p\left(  x\right)  \rho\left(  x\right)
=0,\quad p\in\mathbb{P}.
\]
Then, $L$ satisfies the Pearson equation (\ref{Pearson}) if and only if%
\begin{equation}
\nabla\left(  \phi\rho\right)  +\psi\rho=0. \label{Pearson rho}%
\end{equation}

\end{proposition}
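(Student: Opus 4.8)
The plan is to convert the Pearson equation (\ref{Pearson}) into its equivalent ``weak'' form (\ref{Pearson1}), i.e. $L\left[\phi\Delta p\right]=L\left[\psi p\right]$ for every $p\in\mathbb{P}$, and then to transfer the difference operator off of $p$ and onto the weight $\rho$ by means of the summation-by-parts formula (\ref{SBP}). Concretely, I would write $L\left[\phi\Delta p\right]=\sum_{x=0}^{\infty}\left(\phi\rho\right)(x)\,\Delta p(x)$ and apply (\ref{SBP}) with $f=\phi\rho$, $g=p$, $a=0$, letting $b\to\infty$; this expresses the sum as the boundary term $\left[\left(\phi\rho\right)(x-1)p(x)\right]_{0}^{b+1}$ minus $\sum_{x=0}^{b}p(x)\,\nabla\!\left(\phi\rho\right)(x)$.

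The next step is to check that the boundary term vanishes in the limit. At the upper limit it equals $\phi(b)\rho(b)p(b+1)$, which tends to $0$ as $b\to\infty$ because $\phi(x)p(x+1)$ is a polynomial and the hypothesis $\lim_{x\to\infty}q(x)\rho(x)=0$ kills it; at the lower limit it equals $\phi(-1)\rho(-1)p(0)=0$ since $\rho(-1)=0$. Hence $L\left[\phi\Delta p\right]=-\sum_{x=0}^{\infty}p(x)\,\nabla\!\left(\phi\rho\right)(x)$, while trivially $L\left[\psi p\right]=\sum_{x=0}^{\infty}\psi(x)p(x)\rho(x)$, so that (\ref{Pearson1}) is equivalent to
\[
\sum_{x=0}^{\infty}p(x)\bigl[\nabla\!\left(\phi\rho\right)(x)+\psi(x)\rho(x)\bigr]=0,\qquad p\in\mathbb{P}.
\]

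From here the ``if'' implication is immediate: if $\nabla\!\left(\phi\rho\right)+\psi\rho\equiv 0$ on $\mathbb{N}_{0}$, the bracket vanishes term by term and the displayed identity holds for every $p$, so $L$ satisfies (\ref{Pearson}). For the converse I would specialize $p$ to the monomials $x^{k}$ (or to any convenient basis of $\mathbb{P}$), obtaining that all ``moments'' of the discrete density $x\mapsto\nabla\!\left(\phi\rho\right)(x)+\psi(x)\rho(x)$ are zero. Concluding from this that the density itself vanishes identically is the one point that genuinely needs justification: when $\rho$ has finite support it is a Vandermonde argument, and in the general case one uses the fast decay of $\rho$ supplied by the hypothesis (equivalently, the determinacy of the associated moment problem). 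Granting that, $\nabla\!\left(\phi\rho\right)(x)+\psi(x)\rho(x)=0$ for all $x\geq 0$, which is (\ref{Pearson rho}), and the equivalence is complete. I expect this last vanishing step to be the main obstacle; everything preceding it is a routine application of (\ref{SBP}) together with the two stated boundary/decay conditions.
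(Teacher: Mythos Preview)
Your approach is exactly the one taken in the paper: rewrite (\ref{Pearson}) as (\ref{Pearson1}), apply the summation-by-parts formula (\ref{SBP}) with $f=\phi\rho$ and $g=p$, kill the boundary terms using $\rho(-1)=0$ and the decay hypothesis, and arrive at
\[
\sum_{x=0}^{\infty}p(x)\bigl[\nabla(\phi\rho)(x)+\psi(x)\rho(x)\bigr]=0\quad\text{for all }p\in\mathbb{P}.
\]
The paper then simply asserts that this forces $\nabla(\phi\rho)+\psi\rho=0$, without further comment. So the step you single out as ``the one point that genuinely needs justification'' is precisely the step the paper leaves unjustified; you are being more scrupulous than the authors, not missing anything they supply.
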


\begin{proof}
Since (\ref{Pearson}) is equivalent to (\ref{Pearson1}), we can use
(\ref{SBP}) and conclude that $L$ satisfies (\ref{Pearson}) if and only if%
\[%
{\displaystyle\sum\limits_{x=0}^{\infty}}
\psi\left(  x\right)  p\left(  x\right)  \rho\left(  x\right)  =-%
{\displaystyle\sum\limits_{x=0}^{\infty}}
p\left(  x\right)  \nabla\left(  \phi\rho\right)  \left(  x\right)  ,\quad
p\in\mathbb{P}.
\]
If the last equation holds for all $p\in\mathbb{P},$ then we must have%
\[
\psi\rho=-\nabla\left(  \phi\rho\right)  ,
\]
and the result follows.
\end{proof}

The \emph{class} of a discrete semiclassical linear functional $L$ is defined
by%
\begin{equation}
s=\min\{\max\left\{  \deg\left(  \phi\right)  -2,\ \deg\left(  \psi\right)
\}-1\right\}  , \label{class}%
\end{equation}
where the minimum is taken among all pairs of polynomials $\phi,\psi$ such
that the Pearson equation (\ref{Pearson}) holds for the linear functional $L.$
Functionals of class $s=0$ are called $\Delta$-classical (see \cite{MR1340932}%
, \cite{MR2656096}, and \cite{MR1149380} among others). $\Delta$-semiclassical
linear functionals of class $s=1$ have been described in \cite{MR3227440} and
\cite{MR4238531}.

\begin{remark}
Let $\deg\left(  \phi\right)  =d_{1},$ $\deg\left(  \psi\right)  =d_{2}.$ If
$d_{1}=d_{2}+1=s+2,$ we will always assume the admissibility condition
\begin{equation}
\psi_{d_{2}}\neq n-s,\quad n\in\mathbb{N}_{0}, \label{admiss}%
\end{equation}
where $\psi_{d_{2}}$ is the leading coefficient of $\psi\left(  x\right)  .$
\end{remark}

We will denote by $\{\varphi_{n}\left(  x\right)  \}_{n\geq0} $ the basis of
\emph{falling factorial polynomials} defined by $\varphi_{0}\left(  x\right)
=1$ and%
\begin{equation}
\varphi_{n}\left(  x\right)  =%
{\displaystyle\prod\limits_{k=0}^{n-1}}
\left(  x-k\right)  ,\quad n\in\mathbb{N}. \label{phi def}%
\end{equation}
The polynomials $\varphi_{n}\left(  x\right)  $ satisfy the basic identities%
\begin{equation}
x\varphi_{n}\left(  x\right)  =\varphi_{n+1}\left(  x\right)  +n\varphi
_{n}\left(  x\right)  ,\quad n\in\mathbb{N}_{0}, \label{req}%
\end{equation}
and
\begin{equation}
\Delta\varphi_{n}=n\varphi_{n-1},\quad n\in\mathbb{N}_{0}. \label{diff phi}%
\end{equation}

\begin{lemma}
We have the representation%
\begin{equation}
\varphi_{n}\left(  x-1\right)  =%
{\displaystyle\sum\limits_{k=0}^{n}}
\left(  -1\right)  ^{k}\varphi_{k}\left(  n\right)  \varphi_{n-k}\left(
x\right)  ,\quad n\in\mathbb{N}_{0}. \label{phi x-1}%
\end{equation}

\end{lemma}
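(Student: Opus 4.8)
The plan is to prove the identity \eqref{phi x-1} by exploiting the translation structure of the falling factorial basis together with the product rule \eqref{req}, rather than by a direct combinatorial expansion. Concretely, I would first observe that $\varphi_n(x-1) = \varphi_{n+1}(x)/(x-n)$ whenever $x \neq n$, but since both sides of \eqref{phi x-1} are polynomials in $x$ of degree $n$, it suffices to verify the identity either at $n+1$ distinct points or by an induction on $n$. I expect the cleanest route is induction on $n$: the base case $n=0$ is trivial since $\varphi_0 \equiv 1$, and for the inductive step one writes $\varphi_{n+1}(x-1) = (x-1-n)\,\varphi_n(x-1) = (x - (n+1))\,\varphi_n(x-1)$, substitutes the inductive hypothesis for $\varphi_n(x-1)$, and then re-expands each term $x\,\varphi_{n-k}(x)$ using the three-term identity \eqref{req}, namely $x\,\varphi_{n-k}(x) = \varphi_{n-k+1}(x) + (n-k)\varphi_{n-k}(x)$.

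Carrying this out, the inductive step produces a sum over $k$ of terms of the form $(-1)^k \varphi_k(n)\bigl[\varphi_{n-k+1}(x) + (n-k)\varphi_{n-k}(x) - (n+1)\varphi_{n-k}(x)\bigr] = (-1)^k \varphi_k(n)\bigl[\varphi_{n-k+1}(x) - (k+1)\varphi_{n-k}(x)\bigr]$. Collecting the coefficient of $\varphi_{n+1-j}(x)$ in the resulting double-indexed sum, one gets a contribution $(-1)^{j-1}\varphi_{j-1}(n)$ from the first bracketed term (with $k = j-1$) and a contribution $(-1)^j j\,\varphi_j(n)$ from the second (with $k = j$, noting $k+1 = j$). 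So the coefficient of $\varphi_{n+1-j}(x)$ is $(-1)^j\bigl[j\,\varphi_j(n) - \varphi_{j-1}(n)\bigr]$, and the whole point becomes the Pascal-type recurrence $\varphi_j(n+1) = j\,\varphi_j(n) - \varphi_{j-1}(n)$... wait — more carefully, what we need is $(-1)^j\varphi_j(n+1)$, so the identity reduces to checking $\varphi_j(n+1) = \varphi_j(n)\cdot? $ against the combination that appears; I would verify this last scalar recurrence directly from the definition \eqref{phi def}, e.g.\ by noting $\varphi_j(n) = n!/(n-j)!$ and reducing to an elementary binomial-coefficient identity, or equivalently by applying the operator identities \eqref{req}--\eqref{diff phi} to the constant sequences involved.

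The main obstacle, such as it is, will be bookkeeping: getting the index shifts between $k$ and $j = n+1-\deg$ exactly right when collecting coefficients, and making sure the boundary terms ($k=0$ and $k=n$, i.e.\ $j=0$ and $j=n+1$) are handled consistently with the convention $\varphi_0 \equiv 1$ and $\varphi_m(n) = 0$ for $m > n$. An alternative that sidesteps the induction entirely is to prove \eqref{phi x-1} by evaluating both sides at $x = 0,1,\dots,n$: the left side is $\varphi_n(x-1) = \prod_{k=0}^{n-1}(x-1-k)$, which vanishes for $x \in \{1,\dots,n-1\}$ and is easy to evaluate at $x=0$ and $x=n$, while the right side at $x = \ell$ collapses because $\varphi_{n-k}(\ell) = 0$ once $n-k > \ell$; matching these $n+1$ values pins down the degree-$n$ polynomial identity. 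I would present whichever of the two is shorter in the final writeup, but I anticipate the generating-function-free induction above is the most self-contained given only \eqref{req} and \eqref{phi def}.
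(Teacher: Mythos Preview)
Your induction plan is exactly the paper's proof: write $\varphi_{n+1}(x-1)=(x-1-n)\varphi_n(x-1)$, substitute the inductive hypothesis, expand $x\varphi_{n-k}(x)$ via \eqref{req}, and finish with the Pascal-type identity for $\varphi_k(n+1)$ coming from \eqref{diff phi}. So the approach is correct and the same as the paper's.

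However, your coefficient collection is scrambled. In the sum $\sum_{k=0}^n(-1)^k\varphi_k(n)\bigl[\varphi_{n+1-k}(x)-(k+1)\varphi_{n-k}(x)\bigr]$, the term $\varphi_{n+1-j}(x)$ arises from the \emph{first} bracket at $k=j$ and from the \emph{second} bracket at $k=j-1$ (not the other way round), giving the coefficient
\[
(-1)^j\varphi_j(n)+(-1)^{j}\,j\,\varphi_{j-1}(n)=(-1)^j\bigl[\varphi_j(n)+j\,\varphi_{j-1}(n)\bigr],
\]
not $(-1)^j[j\varphi_j(n)-\varphi_{j-1}(n)]$. The scalar identity you then need is $\varphi_j(n+1)=\varphi_j(n)+j\,\varphi_{j-1}(n)$, which is precisely $\Delta\varphi_j=j\varphi_{j-1}$ evaluated at $n$, i.e.\ \eqref{diff phi}; no separate factorial computation is required. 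With this correction the boundary cases $j=0$ and $j=n+1$ also fall out, since $\varphi_{n+1}(n)=0$.
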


\begin{proof}
We use induction. The identity is true for $n=0$ since $\varphi_{0}\left(
x\right)  =1.$ Assuming (\ref{phi x-1}) to be true for all $0\leq k\leq n$ and
using the recurrence (\ref{req}), we have%
\begin{align*}
\varphi_{n+1}\left(  x-1\right)   &  =\left(  x-1-n\right)  \varphi_{n}\left(
x-1\right)  =%
{\displaystyle\sum\limits_{k=0}^{n}}
\left(  -1\right)  ^{k}\varphi_{k}\left(  n\right)  \left(  x-1-n\right)
\varphi_{n-k}\left(  x\right) \\
&  =%
{\displaystyle\sum\limits_{k=0}^{n}}
\left(  -1\right)  ^{k}\varphi_{k}\left(  n\right)  \left[  \varphi
_{n+1-k}\left(  x\right)  -\left(  k+1\right)  \varphi_{n-k}\left(  x\right)
\right]  .
\end{align*}
Since $\varphi_{n+1}\left(  n\right)  =0,$ we can rewrite the sum above as%
\[
\varphi_{n+1}\left(  x-1\right)  =%
{\displaystyle\sum\limits_{k=0}^{n+1}}
\left(  -1\right)  ^{k}\varphi_{k}\left(  n\right)  \varphi_{n+1-k}\left(
x\right)  +%
{\displaystyle\sum\limits_{k=0}^{n}}
\left(  -1\right)  ^{k+1}\left(  k+1\right)  \varphi_{k}\left(  n\right)
\varphi_{n-k}\left(  x\right)  ,
\]
or shifting the index in the second sum%
\[
\varphi_{n+1}\left(  x-1\right)  =%
{\displaystyle\sum\limits_{k=0}^{n+1}}
\left(  -1\right)  ^{k}\varphi_{k}\left(  n\right)  \varphi_{n+1-k}\left(
x\right)  +%
{\displaystyle\sum\limits_{k=1}^{n+1}}
\left(  -1\right)  ^{k}k\varphi_{k-1}\left(  n\right)  \varphi_{n+1-k}\left(
x\right)  .
\]

But from (\ref{diff phi}) we see that%
\[
\varphi_{k}\left(  n+1\right)  =\varphi_{k}\left(  n\right)  +k\varphi
_{k-1}\left(  n\right)  ,
\]
and we conclude that
\[
\varphi_{n+1}\left(  x-1\right)  =%
{\displaystyle\sum\limits_{k=0}^{n+1}}
\left(  -1\right)  ^{k}\varphi_{k}\left(  n+1\right)  \varphi_{n+1-k}\left(
x\right)  .
\]

\end{proof}

\begin{proposition}
\label{Prop2}Let $\{p_{n}\left(  x\right)  \}_{n\geq0}$ be a sequence of
polynomials orthogonal with respect to a $\Delta-$semiclassical functional $L$
of class $s.$ Then,%
\begin{equation}
L\left[  \phi\Delta q\Delta p_{n}\right]  =0,\quad q\in\mathbb{P},\quad
\deg\left(  q\right)  <n-s. \label{ortho1}%
\end{equation}

\end{proposition}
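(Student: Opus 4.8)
The plan is to obtain (\ref{ortho1}) directly from the Pearson equation (\ref{Pearson1}), the orthogonality conditions (\ref{ortho}), and a degree count. I fix a pair of polynomials $(\phi,\psi)$ realizing the class, so that the Pearson equation (\ref{Pearson}) holds with $\deg(\phi)\leq s+2$ and $\deg(\psi)\leq s+1$; I write $(Eu)(x)=u(x+1)$ for the forward shift, which is a degree-preserving bijection of $\mathbb{P}$ satisfying the Leibniz-type identity $\Delta(uv)=(Eu)\,\Delta v+v\,\Delta u$; and I use the elementary consequence of (\ref{ortho}) that $L[P p_{n}]=0$ for every $P\in\mathbb{P}$ with $\deg(P)<n$, which holds because such a $P$ is a linear combination of $p_{0},\dots,p_{n-1}$.

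The first step is to establish, for an arbitrary $u\in\mathbb{P}$, the auxiliary identity
\[
L\!\left[\phi\,(Eu)\,\Delta p_{n}\right]=L\!\left[\psi\,u\,p_{n}\right]-L\!\left[(\phi\,\Delta u)\,p_{n}\right].
\]
This follows by applying (\ref{Pearson1}) to the polynomial $p=u\,p_{n}$ and splitting $\phi\,\Delta(u\,p_{n})=\phi\,(Eu)\,\Delta p_{n}+\phi\,p_{n}\,\Delta u$ via the Leibniz identity (which keeps $p_{n}$ itself unshifted, only shifting the factor $u$).

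The main step is the degree bookkeeping. Suppose $\deg(u)\leq n-s-2$. Then $\deg(\psi u)\leq(s+1)+(n-s-2)=n-1<n$, and $\deg(\phi\,\Delta u)\leq(s+2)+(\deg(u)-1)\leq n-1<n$ (with the convention $\Delta u=0$ when $u$ is constant, in which case that term is trivially zero). Hence both terms on the right-hand side of the auxiliary identity vanish by orthogonality, so $L[\phi\,(Eu)\,\Delta p_{n}]=0$ for every $u$ with $\deg(u)\leq n-s-2$. Since $E$ maps the space of polynomials of degree $\leq n-s-2$ onto itself, this is equivalent to $L[\phi\,v\,\Delta p_{n}]=0$ for every $v\in\mathbb{P}$ with $\deg(v)\leq n-s-2$. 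Finally, if $q\in\mathbb{P}$ with $\deg(q)<n-s$, then $v=\Delta q$ has degree $\leq n-s-2$, and substituting $v=\Delta q$ gives (\ref{ortho1}).

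The only delicate point is the degree count in the auxiliary identity: it relies precisely on the class being $s$, so that $\phi$ and $\psi$ can be chosen with $\deg(\phi)\leq s+2$ and $\deg(\psi)\leq s+1$, and on handling the constant case of $u$ separately. No use of the admissibility condition (\ref{admiss}) is required.
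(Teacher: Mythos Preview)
Your proof is correct and takes a genuinely different, more streamlined route than the paper's. The paper works in the falling-factorial basis $\{\varphi_{k}\}$: it first proves the identity $\varphi_{n}(x-1)=\sum_{k}(-1)^{k}\varphi_{k}(n)\varphi_{n-k}(x)$, uses it to expand $\Delta\varphi_{k}\,\Delta p_{n}$ as a combination of terms $\Delta\varphi_{k-j}\,p_{n}-\Delta(\varphi_{k-j}p_{n})$, and then applies the Pearson equation and orthogonality termwise. You bypass all of this by applying the Leibniz identity $\Delta(u p_{n})=(Eu)\,\Delta p_{n}+p_{n}\,\Delta u$ directly---keeping $p_{n}$ unshifted---and invoking Pearson once on the product $u p_{n}$; the resulting identity $L[\phi\,(Eu)\,\Delta p_{n}]=L[\psi u p_{n}]-L[(\phi\,\Delta u)\,p_{n}]$ reduces everything to a single degree count. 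The final observation that the shift $E$ is a bijection on polynomials of bounded degree lets you replace $Eu$ by an arbitrary $v$, which is neat. What the paper's approach buys is only that it stays within the explicit falling-factorial calculus used elsewhere in the paper; your argument is shorter, needs no auxiliary lemma, and makes it transparent that the bound $\deg(q)<n-s$ comes straight from $\deg(\phi)\leq s+2$, $\deg(\psi)\leq s+1$.
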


\begin{proof}
The equation (\ref{ortho1}) is obviously true if $q=1,$ so we can assume that
$\deg\left(  q\right)  \geq1.$ Using (\ref{phi x-1}) with $k\geq1,$ we have%
\[
\Delta\varphi_{k}=-%
{\displaystyle\sum\limits_{j=1}^{k}}
\left(  -1\right)  ^{j}\varphi_{j}\left(  k\right)  \varphi_{k-j}\left(
x+1\right)  ,
\]
and multiplying by $\Delta p_{n}$ we get
\[
\Delta\varphi_{k}\Delta p_{n}=%
{\displaystyle\sum\limits_{j=1}^{k}}
\left(  -1\right)  ^{j}\varphi_{j}\left(  k\right)  \left[  \Delta
\varphi_{k-j}p_{n}-\Delta\left(  \varphi_{k-j}p_{n}\right)  \right]  ,
\]
where we have used the identity%
\begin{equation}
\Delta\left(  fg\right)  =g\left(  x+1\right)  \Delta f+f\Delta g=g\Delta
f+f\Delta g+\Delta f\Delta g. \label{diff prod}%
\end{equation}

Using the Pearson equation (\ref{Pearson}) and (\ref{diff phi}), we obtain
\begin{align*}
L\left[  \phi\Delta\varphi_{k}\Delta p_{n}\right]   &  =%
{\displaystyle\sum\limits_{j=1}^{k}}
\left(  -1\right)  ^{j}\varphi_{j}\left(  k\right)  \left(  L\left[
\phi\Delta\varphi_{k-j}p_{n}\right]  -L\left[  \psi\varphi_{k-j}p_{n}\right]
\right) \\
&  =%
{\displaystyle\sum\limits_{j=1}^{k}}
\left(  -1\right)  ^{j}\varphi_{j}\left(  k\right)  \left\{  \left(
k-j\right)  L\left[  \phi\varphi_{k-j-1}p_{n}\right]  -L\left[  \psi
\varphi_{k-j}p_{n}\right]  \right\}  ,
\end{align*}
and since the sequence of polynomials $\{p_{n}(x)\}_{n\geq0}$ is orthogonal
with respect to $L$ we conclude that%
\[
L\left[  \phi\Delta\varphi_{k}\Delta p_{n}\right]  =0,\quad k<\min\left\{
n+2-\deg\left(  \phi\right)  ,\ n+1-\deg\left(  \psi\right)  \right\}  =n-s.
\]
Using the fact that $\left\{  \varphi_{k}\left(  x\right)  \right\}  _{k\geq
0}$ is a polynomial basis, the result follows.
\end{proof}

Using Proposition \ref{Prop2}, we can prove the following characterization of
discrete semiclassical polynomials.

\begin{theorem}
Let $\{p_{n}\left(  x\right)  \}_{n\geq0}$ be a sequence of monic polynomials
orthogonal with respect to a functional $L.$ The following statements are equivalent:
\end{theorem}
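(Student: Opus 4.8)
The plan is to prove the equivalence by a cycle of implications, with Proposition~\ref{Prop2} as the engine that turns the Pearson equation into a \emph{structure relation}
\[
\phi(x)\,\Delta p_{n+1}(x)=\sum_{k=n-s}^{n+\deg\phi}\lambda_{n,k}\,p_{k}(x),\qquad n\geq s,\qquad \lambda_{n,n-s}\neq0,
\]
and a short duality argument that runs the passage backwards. Should the statement list a third equivalent condition, I expect it to be a quasi-orthogonality property of the shifted differences $\hat p_{n}:=\tfrac{1}{n+1}\Delta p_{n+1}$ (essentially Proposition~\ref{Prop2} itself, turned into an ``if and only if''), and it will slot into the same loop.

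For the implication from ``$\Delta$-semiclassical of class $s$'' to the structure relation, let $(\phi,\psi)$ be a Pearson pair with $\deg\phi=d_{1}\leq s+2$ and $\deg\psi=d_{2}\leq s+1$, and expand $\phi\,\Delta p_{n+1}$, a polynomial of degree $n+d_{1}$ with leading coefficient $n+1$, in the basis $\{p_{k}\}$: $\phi\,\Delta p_{n+1}=\sum_{k}\lambda_{n,k}p_{k}$ with $\lambda_{n,k}=h_{k}^{-1}L[\phi\,\Delta p_{n+1}\,p_{k}]$. The key step is to split $p_{k}\,\Delta p_{n+1}$ via (\ref{diff prod}) as $\Delta(p_{n+1}p_{k})-p_{n+1}\,\Delta p_{k}-\Delta p_{k}\,\Delta p_{n+1}$ and then apply $L$, using (\ref{Pearson1}) on the first piece:
\[
L[\phi\,\Delta p_{n+1}\,p_{k}]=L[\psi\,p_{n+1}p_{k}]-L[\phi\,p_{n+1}\,\Delta p_{k}]-L[\phi\,\Delta p_{k}\,\Delta p_{n+1}].
\]
By orthogonality the first two terms vanish once $\deg(\psi p_{k})\leq n$ and $\deg(\phi\,\Delta p_{k})\leq n$, i.e. for $k\leq n-d_{2}$ and $k\leq n+1-d_{1}$; by Proposition~\ref{Prop2} applied to $p_{n+1}$ the last one vanishes for $k<(n+1)-s$. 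Since $d_{1}\leq s+2$ and $d_{2}\leq s+1$, all three bounds follow from $k<n-s$, so $\lambda_{n,k}=0$ for $k\leq n-s-1$, which is the structure relation; note $\lambda_{n,n+d_{1}}=n+1\neq0$ automatically. The nonvanishing of the \emph{bottom} coefficient $\lambda_{n,n-s}$ I would extract from the minimality built into the definition (\ref{class}) of the class: if $\lambda_{n,n-s}$ vanished for the pertinent $n$, one could read off a Pearson pair of strictly smaller degree, and this is precisely the scenario ruled out by the admissibility hypothesis (\ref{admiss}) when $d_{1}=d_{2}+1=s+2$.

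Conversely, assume the structure relation (with $\deg\phi\leq s+2$). Pairing it with $p_{0}=1$ and using $L[p_{k}p_{0}]=h_{0}\delta_{k,0}$ gives $L[\phi\,\Delta p_{m}]=0$ for every $m\geq s+2$; equivalently the functional $u:=\Delta^{\ast}(\phi L)$, which by (\ref{adj}) acts by $u[p]=-L[\phi\,\Delta p]$, annihilates $p_{m}$ for all $m\geq s+2$. Since $L[p_{i}p_{j}]=h_{i}\delta_{i,j}$, a functional that kills every $p_{m}$ with $m\geq s+2$ is of the form $\pi L$ for a unique polynomial $\pi$ of degree $\leq s+1$ (subtract off $\sum_{i\leq s+1}(u[p_{i}]/h_{i})\,p_{i}L$ and check that the difference annihilates the whole basis). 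Hence $\Delta^{\ast}(\phi L)+\psi L=0$ with $\psi=-\pi$ and $\deg\psi\leq s+1$. It remains to check $\deg\psi\geq1$ --- evaluating the Pearson equation at $p=1$ forces $\psi\equiv0$ if $\psi$ is constant, but then $\Delta^{\ast}(\phi L)=0$ contradicts $\lambda_{s,0}\neq0$ in the structure relation --- and that the class is exactly $s$, which again rests on $\lambda_{n,n-s}\neq0$.

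If there is a third statement, say ``$L[\phi\,\Delta q\,\Delta p_{n}]=0$ whenever $\deg q<n-s$'' (equivalently, $\{\hat p_{n}\}$ quasi-orthogonal of order $s$ with respect to $\phi L$), I would close the loop against the structure relation: testing the structure relation on $\phi\,\Delta q$ and reusing (\ref{diff prod}) plus the Pearson step gives one direction, and the expansion-in-basis computation above gives the other. The single genuinely delicate point --- everything else being bookkeeping with (\ref{Pearson1}), (\ref{diff prod}), (\ref{adj}) and Proposition~\ref{Prop2} --- is the nonvanishing of the extremal coefficient $\lambda_{n,n-s}$ together with the matching claim that the class of $L$ is $s$ and not smaller; this is a minimality argument, and it is exactly what the admissibility condition (\ref{admiss}) is designed to handle.
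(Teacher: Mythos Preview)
Your approach is essentially the paper's own. Both directions proceed exactly as you outline: for (i)$\Rightarrow$(ii) the paper expands $\phi\,\Delta p_{n+1}$ in the basis $\{p_k\}$, splits via (\ref{diff prod}), applies the Pearson identity (\ref{Pearson1}) and Proposition~\ref{Prop2}; for (ii)$\Rightarrow$(i) it writes $\Delta^{\ast}(\phi L)$ in the dual basis $\{h_n^{-1}p_nL\}$ and reads off $\psi$ from the surviving low-index coefficients --- your formulation via ``$u$ annihilates $p_m$ for $m\geq s+2$, hence $u=\pi L$ with $\deg\pi\le s+1$'' is the same argument in different words. The only substantive difference is in the nonvanishing of the extremal coefficient $\lambda_{n,n-s}$: the paper computes it explicitly as
\[
\epsilon_{n,n-s}=h_{n+1}\Bigl(\tfrac{\psi_{d_2}}{h_{n+1-d_2}}\delta_{d_2,s+1}-\tfrac{n+2-d_1}{h_{n+2-d_1}}\delta_{d_1,s+2}\Bigr)
\]
and checks case by case, whereas you invoke minimality of the class plus (\ref{admiss}) abstractly; the explicit formula is cleaner and shows directly why (\ref{admiss}) is exactly the condition needed when $d_1=d_2+1=s+2$. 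There is no third equivalent condition in the paper's statement, so that part of your plan is unnecessary.
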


\begin{proposition}
(i) $L$ is $\Delta-$semiclassical of class $s.$

(ii) There exists a monic polynomial $\phi\left(  x\right)  $ such that the
polynomials $p_{n}$ satisfy the \emph{structure equation}%
\begin{equation}
\phi\left(  x\right)  \Delta p_{n+1}=%
{\displaystyle\sum\limits_{k=n-s}^{n+d_{1}}}
\epsilon_{n,k}p_{k},\quad n>s,\quad\epsilon_{n,n-s}\neq0. \label{structure}%
\end{equation}

\end{proposition}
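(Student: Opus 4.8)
The plan is to prove the two implications separately, using in both directions the product rule (\ref{diff prod}), the Pearson equation in the form (\ref{Pearson1}), the orthogonality relations (\ref{ortho}), Proposition \ref{Prop2}, the three-term recurrence satisfied by any monic orthogonal polynomial sequence, and quasi-definiteness (all $h_n\neq 0$).

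\emph{From (i) to (ii).} First I would fix a pair $(\phi,\psi)$ realizing the class, so that $\deg\phi=d_1$, $\deg\psi=d_2$ and $\max\{d_1-2,\,d_2-1\}=s$; in particular $d_1\leq s+2$ and $d_2\leq s+1$. Since $\Delta p_{n+1}$ has degree $n$, the polynomial $\phi\,\Delta p_{n+1}$ has degree $n+d_1$ and expands uniquely as $\phi\,\Delta p_{n+1}=\sum_{k=0}^{n+d_1}\epsilon_{n,k}p_k$ with $\epsilon_{n,k}h_k=L[p_k\,\phi\,\Delta p_{n+1}]$. Rewriting $p_k\,\Delta p_{n+1}$ through (\ref{diff prod}) as $\Delta(p_kp_{n+1})-p_{n+1}\Delta p_k-\Delta p_k\,\Delta p_{n+1}$ gives
\[
\epsilon_{n,k}h_k=L\bigl[\phi\,\Delta(p_kp_{n+1})\bigr]-L\bigl[\phi\,p_{n+1}\Delta p_k\bigr]-L\bigl[\phi\,\Delta p_k\,\Delta p_{n+1}\bigr].
\]
By (\ref{Pearson1}) the first term is $L[\psi\,p_kp_{n+1}]$, which vanishes once $\deg(\psi p_k)=d_2+k<n+1$; the second vanishes once $\deg(\phi\,\Delta p_k)=d_1+k-1<n+1$; the third vanishes for $k<n+1-s$ by Proposition \ref{Prop2}. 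As $d_1\leq s+2$ and $d_2\leq s+1$, each threshold is at least $n-s$, so $\epsilon_{n,k}=0$ for $k<n-s$ and the sum starts at $n-s$. To see $\epsilon_{n,n-s}\neq 0$ for $n>s$, put $k=n-s$: the third term still vanishes, while the first and second contribute only through their leading coefficients, namely $\psi_{d_2}h_{n+1}$ exactly when $d_2=s+1$ and $(n-s)h_{n+1}$ exactly when $d_1=s+2$ (using that $\phi$ is monic and $\Delta p_{n-s}$ has leading coefficient $n-s$). Since $\max\{d_1-2,d_2-1\}=s$, at least one case occurs and $\epsilon_{n,n-s}h_{n-s}$ equals $\psi_{d_2}h_{n+1}$, or $-(n-s)h_{n+1}$, or $(\psi_{d_2}-(n-s))h_{n+1}$; as $n>s$ and all $h_j\neq0$, each is nonzero, the last being exactly where the admissibility condition (\ref{admiss}) enters.

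\emph{From (ii) to (i).} Here I would analyse the linear functional $N$ given by $N[p]:=L[\phi\,\Delta p]$. For $p=p_m$ with $m\geq s+2$, the structure equation (\ref{structure}) with $n=m-1>s$ gives $\phi\,\Delta p_m=\sum_{k=m-1-s}^{m-1+d_1}\epsilon_{m-1,k}p_k$; applying $L$ and using $L[p_k]=h_0\delta_{k,0}$ together with $m-1-s\geq 1$ yields $N[p_m]=0$. Hence $N$ vanishes on $p_m$ for all $m\geq s+2$, so $N=\psi L$ with $\psi:=\sum_{j=0}^{s+1}\bigl(L[\phi\,\Delta p_j]/h_j\bigr)p_j$, a polynomial of degree at most $s+1$, because $N$ and $\psi L$ agree on the whole basis $\{p_k\}$ by (\ref{ortho}). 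Thus $L[\phi\,\Delta p]=L[\psi p]$ for all $p$, i.e. $\Delta^{\ast}(\phi L)+\psi L=0$. To exclude $\deg\psi=0$, a constant $\psi$ must be $0$ (evaluate at $p=1$), and $\psi=0$ would force $L[\phi q]=0$ for every $q$ since $\Delta$ maps $\mathbb{P}$ onto $\mathbb{P}$; writing $\phi\,p_{n-d_1}=p_n+(\text{lower order terms})$ this gives $h_n=L[\phi\,p_{n-d_1}p_n]=0$ for $n\geq d_1$, contradicting quasi-definiteness. So $L$ is $\Delta$-semiclassical, and I would check its class is exactly $s$: it is at most $\max\{d_1-2,s\}$ by construction, while feeding the class back into the implication (i)$\Rightarrow$(ii) and comparing summation ranges — using the three-term recurrence to control how multiplication by a fixed polynomial shifts indices in the $\{p_k\}$-expansion — shows it cannot be smaller.

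The step I expect to be the main obstacle is the verification $\epsilon_{n,n-s}\neq 0$ in (i)$\Rightarrow$(ii): it requires keeping careful track of which of the three pieces of $\epsilon_{n,n-s}h_{n-s}$ survive, organised by whether $d_1=s+2$ and/or $d_2=s+1$, and it is exactly when both equalities hold that the admissibility hypothesis (\ref{admiss}) is indispensable. The companion sharpness claim — that (ii) forces the class to be $s$ rather than merely at most $s$ — is the corresponding delicate point on the converse side.
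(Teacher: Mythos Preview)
Your proposal is correct and follows essentially the same route as the paper. For (i)$\Rightarrow$(ii) both you and the paper expand $\phi\,\Delta p_{n+1}$ in the basis $\{p_k\}$, split $L[\phi\,p_k\,\Delta p_{n+1}]$ via the product rule (\ref{diff prod}) into the three pieces $L[\psi p_kp_{n+1}]$, $L[\phi\,p_{n+1}\Delta p_k]$, $L[\phi\,\Delta p_k\,\Delta p_{n+1}]$, and kill them with orthogonality and Proposition~\ref{Prop2}; your case analysis for $\epsilon_{n,n-s}$ reproduces exactly the paper's formula with the Kronecker deltas $\delta_{d_2,s+1}$ and $\delta_{d_1,s+2}$. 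For (ii)$\Rightarrow$(i) your functional $N[p]=L[\phi\,\Delta p]$ is the paper's $-\Delta^{\ast}(\phi L)$, and showing $N[p_m]=0$ for $m\geq s+2$ is the same computation the paper does via the dual basis $\{p_nL/h_n\}$; your $\psi$ coincides with the paper's $\psi=\mu_0\sum_{n=1}^{s+1}(\epsilon_{n-1,0}/h_n)p_n$.

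You actually go a little further than the paper: the paper's proof of (ii)$\Rightarrow$(i) stops at ``$\deg(\psi)-1\leq s$, in agreement with (\ref{class})'' and neither checks $\deg\psi\geq 1$ nor argues that the class is exactly $s$. Your exclusion of $\psi$ constant (via $p=1$ and the surjectivity of $\Delta$) fills a small gap the paper leaves open; your sketch for the sharpness of the class is plausible but, as you flag, would need to relate the given $\phi$ to a minimal-degree $\phi'$ to be made rigorous --- the paper simply does not address this point.
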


\begin{proof}
(i) $\Rightarrow$ (ii) Since the sequence $\{p_{n}\}_{n\geq0}$ is a polynomial
basis in the linear space of polynomials, we can write
\begin{equation}
\phi\left(  x\right)  \Delta p_{n+1}=%
{\displaystyle\sum\limits_{k=0}^{n+d_{1}}}
\epsilon_{n,k}p_{k}. \label{sum}%
\end{equation}
Using the orthogonality relation (\ref{ortho}) and (\ref{diff prod}), we have%
\[
h_{k}\epsilon_{n,k}=L\left[  \phi p_{k}\Delta p_{n+1}\right]  =L\left[
\phi\Delta\left(  p_{k}p_{n+1}\right)  \right]  -L\left[  \phi\Delta
p_{k}p_{n+1}\right]  -L\left[  \phi\Delta p_{k}\Delta p_{n+1}\right]  .
\]

Using (\ref{Pearson1}) and orthogonality, we get%
\[
L\left[  \phi\Delta\left(  p_{k}p_{n+1}\right)  \right]  =L\left[  \psi
p_{k}p_{n+1}\right]  =0,\quad k<n+1-d_{2},
\]
and%
\[
L\left[  \phi\Delta\varphi_{k}p_{n+1}\right]  =kL\left[  \phi\varphi
_{k-1}p_{n+1}\right]  =0,\quad k<n+2-d_{1},
\]
from which it follows that
\[
L\left[  \phi\Delta\left(  p_{k}p_{n+1}\right)  \right]  -L\left[  \phi\Delta
p_{k}p_{n+1}\right]  =0,\quad k<n-s.
\]
Using (\ref{ortho1}) we see that%
\[
L\left[  \phi\Delta p_{k}\Delta p_{n+1}\right]  =0,\quad k<n+1-s,
\]
and therefore%
\[
\epsilon_{n,k}=0,\quad k<n-s.
\]
For $k=n-s,$ we have
\[
\epsilon_{n,n-s}=h_{n+1}\left(  \frac{\psi_{d_{2}}}{h_{n+1-d_{2}}}%
\delta_{d_{2},s+1}-\frac{n+2-d_{1}}{h_{n+2-d_{1}}}\delta_{d_{1},s+2}\right)
,
\]
and it follows that $\epsilon_{n,n-s}\neq0$ as long as $d_{1}-2\neq d_{2}-1.$
If $d_{1}=d_{2}+1,$ we need the additional condition (\ref{admiss}).

(ii) $\Rightarrow$ (i) Since $\left\{  \frac{p_{n}}{h_{n}}L\right\}  _{n\geq
0}$ is a basis of $\mathbb{P}^{\ast}$ satisfying%
\[
\frac{p_{n}}{h_{n}}L\left[  p_{k}\right]  =\frac{1}{h_{n}}L\left[  p_{k}%
p_{n}\right]  =\delta_{k,n},
\]
we have the representation%
\[
\Delta^{\ast}\left(  \phi L\right)  =%
{\displaystyle\sum\limits_{n=0}^{\infty}}
c_{n}\frac{p_{n}}{h_{n}}L.
\]
Using the structure relation (\ref{structure}) and orthogonality, we get%
\begin{align*}
c_{n+1}  &  =\Delta^{\ast}\left(  \phi L\right)  \left[  p_{n+1}\right]
=-L\left[  \phi\Delta p_{n+1}\right] \\
&  =-%
{\displaystyle\sum\limits_{k=n-s}^{n+d_{1}}}
\epsilon_{n,k}L\left[  p_{k}\right]  =\left\{
\begin{array}
[c]{c}%
0,\quad n>s\\
-\mu_{0}\epsilon_{n,0},\quad n\leq s
\end{array}
\right.  ,
\end{align*}
where $\mu_{0}=L\left[  1\right]  .$ Thus, $\Delta^{\ast}\left(  \phi
L\right)  +\psi L=0,$ with%
\[
\psi\left(  x\right)  =\mu_{0}%
{\displaystyle\sum\limits_{n=1}^{s+1}}
\frac{\epsilon_{n-1,0}}{h_{n}}p_{n}\left(  x\right)  .
\]
Note that $\deg\left(  \psi\right)  -1\leq s,$ in agreement with (\ref{class}).
\end{proof}

For more information about this kind of structure relations for $\Delta
$-semiclassical linear functionals, check \cite{MR1665164}. Notice that in the
$\Delta$-classical case $\left(  s=0\right)  $ two structure relations
characterizing such linear functionals are given in \cite{MR1340932}.

\section{Coherent pairs}

Let $\left\{  L_{0},L_{1}\right\}  $ be a pair of quasi-definite functionals
and $\left\{  P_{n}^{\left(  0\right)  }\left(  x\right)  \right\}  _{n\geq
0},\left\{  P_{n}^{\left(  1\right)  }\left(  x\right)  \right\}  _{n\geq0}$
be the corresponding sequences of monic orthogonal polynomials. We say that
$\left\{  L_{0},L_{1}\right\}  $ is a $\Delta-$\emph{coherent pair of the
second kind} (abbreviated $\Delta\mathrm{c}2)$ if there exists $\left\{
\tau_{n}\right\}  _{n\geq0}\subset\mathbb{C}\setminus\left\{  0\right\}  $
such that
\begin{equation}
Q_{n}(x)=\frac{\Delta P_{n+1}^{\left(  0\right)  }(x)}{n+1}=P_{n}^{\left(
1\right)  }(x)-\tau_{n}P_{n-1}^{\left(  1\right)  } (x),\quad n\in
\mathbb{N}_{0}, \label{coherent}%
\end{equation}
where $P_{-1}^{\left(  0\right)  }(x)=P_{-1}^{\left(  1\right)  }(x)=0.$ The
sequences of polynomials $\left\{  P_{n}^{\left(  i\right)  }\left(  x\right)
\right\}  _{n\geqq0},$ $i=0,1,$ satisfy the orthogonality conditions%
\begin{equation}
L_{i}\left[  P_{k}^{\left(  i\right)  }P_{n}^{\left(  i\right)  }\right]
=h_{n}^{\left(  i\right)  }\delta_{n,k},\quad i=0,1,\quad n,k\in\mathbb{N}%
_{0}. \label{ortho2}%
\end{equation}

\begin{proposition}
\label{Prop3}If $\left\{  L_{0},L_{1}\right\}  $ is a $\Delta\mathrm{c}2$ and
the functionals $\mathbf{v}_{n}^{\left(  i\right)  }$ are defined by
\begin{equation}
\mathbf{v}_{n}^{\left(  i\right)  }=\frac{P_{n}^{\left(  i\right)  }}%
{h_{n}^{\left(  i\right)  }}L_{i},\quad i=0,1,\quad n\in\mathbb{N}_{0},
\label{vn def}%
\end{equation}
then
\begin{equation}
\Delta^{\ast}\mathbf{v}_{n}^{\left(  1\right)  }=\tau_{n+1}\left(  n+2\right)
\mathbf{v}_{n+2}^{\left(  0\right)  }-\left(  n+1\right)  \mathbf{v}%
_{n+1}^{\left(  0\right)  },\quad n\in\mathbb{N}_{0}. \label{D v1}%
\end{equation}

\end{proposition}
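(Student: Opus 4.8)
The plan is to prove \eqref{D v1} as an identity in $\mathbb{P}^{\ast}$ by testing both sides against the basis $\{P_{k}^{(0)}(x)\}_{k\geq0}$ of $\mathbb{P}$: since a linear functional is determined by its values on a basis, it suffices to check that the two functionals agree on every $P_{k}^{(0)}$. The only structural fact needed beforehand is that $\{\mathbf{v}_{n}^{(i)}\}_{n\geq0}$ is the dual basis associated with $\{P_{n}^{(i)}\}_{n\geq0}$; this is immediate from \eqref{vn def}, \eqref{mult} and the orthogonality conditions \eqref{ortho2}, giving $\mathbf{v}_{n}^{(i)}\big[P_{k}^{(i)}\big]=\delta_{k,n}$ for $i=0,1$.

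Evaluating the right-hand side of \eqref{D v1} on $P_{k}^{(0)}$ is then trivial: by the dual basis property for $i=0$ it equals $\tau_{n+1}(n+2)\,\delta_{k,n+2}-(n+1)\,\delta_{k,n+1}$, which vanishes unless $k=n+1$ or $k=n+2$.

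For the left-hand side I would use the definition \eqref{adj} of the adjoint operator to write $\big(\Delta^{\ast}\mathbf{v}_{n}^{(1)}\big)\big[P_{k}^{(0)}\big]=-\mathbf{v}_{n}^{(1)}\big[\Delta P_{k}^{(0)}\big]$. For $k=0$ this is $0$ because $\Delta P_{0}^{(0)}=\Delta 1=0$, matching the right-hand side since $n\geq0$ forces $\delta_{k,n+1}=\delta_{k,n+2}=0$. For $k\geq1$, the coherence relation \eqref{coherent} with $n$ replaced by $k-1$ gives $\Delta P_{k}^{(0)}=k\,Q_{k-1}=k\big(P_{k-1}^{(1)}-\tau_{k-1}P_{k-2}^{(1)}\big)$, so applying $\mathbf{v}_{n}^{(1)}$ and using the dual basis property for $i=1$ yields $-k\big(\delta_{k-1,n}-\tau_{k-1}\delta_{k-2,n}\big)$. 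Rewriting the two Kronecker deltas as $\delta_{k,n+1}$ and $\delta_{k,n+2}$ and substituting $k=n+1$, resp.\ $k=n+2$, in the surviving coefficients gives exactly $-(n+1)\delta_{k,n+1}+(n+2)\tau_{n+1}\delta_{k,n+2}$, which agrees term by term with the right-hand side. Hence the two functionals coincide on every $P_{k}^{(0)}$ and therefore are equal.

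The argument is essentially mechanical; the only thing to watch is the behaviour at the small indices. The case $k=0$ must be treated separately (done above), and one should note that the case $k=1$ is consistent with the general formula because $Q_{0}=P_{0}^{(1)}-\tau_{0}P_{-1}^{(1)}=P_{0}^{(1)}=1=\Delta P_{1}^{(0)}$ under the convention $P_{-1}^{(1)}=0$; in particular the (undetermined) value of $\tau_{0}$ is irrelevant since it multiplies $P_{-1}^{(1)}=0$. Beyond bookkeeping of these index shifts I do not expect any genuine obstacle.
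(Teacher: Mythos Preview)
Your argument is correct. Both your proof and the paper's rest on the same two ingredients: the dual-basis identity $\mathbf{v}_{n}^{(i)}[P_{k}^{(i)}]=\delta_{n,k}$ and the coherence relation \eqref{coherent}. The difference is purely organizational. The paper introduces an intermediate dual basis $\{\mathbf{u}_{n}\}$ defined by $\mathbf{u}_{n}[Q_{k}]=\delta_{n,k}$, first establishes the decomposition $\mathbf{v}_{n}^{(1)}=\mathbf{u}_{n}-\tau_{n+1}\mathbf{u}_{n+1}$ (by evaluating on $Q_{k}$), then computes $\Delta^{\ast}\mathbf{u}_{n}=-(n+1)\mathbf{v}_{n+1}^{(0)}$ (by evaluating on $P_{k}^{(0)}$), and finally applies $\Delta^{\ast}$ to the decomposition. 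You bypass this intermediate object and test $\Delta^{\ast}\mathbf{v}_{n}^{(1)}$ directly against $P_{k}^{(0)}$, which is more economical here. The paper's two-step factoring, on the other hand, isolates the relation $\mathbf{v}_{n}^{(1)}=\mathbf{u}_{n}-\tau_{n+1}\mathbf{u}_{n+1}$ as a standalone fact, which can be handy if one needs it elsewhere; for the present statement your direct route is at least as clean.
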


\begin{proof}
Using (\ref{ortho2}) and (\ref{vn def}), we have%
\begin{equation}
\mathbf{v}_{n}^{\left(  i\right)  }\left[  P_{k}^{\left(  i\right)  }\right]
=\delta_{n,k},\quad i=0,1,\quad n,k\in\mathbb{N}_{0}. \label{dual def}%
\end{equation}
Let $\mathbf{u}_{n}\in\mathbb{P}^{\ast}$ be defined by%
\begin{equation}
\mathbf{u}_{n}\left[  Q_{k}\right]  =\delta_{n,k},\quad n,k\in\mathbb{N}_{0}.
\label{dual un}%
\end{equation}
Since the sequence of linear functionals $\{\mathbf{u}_{n}\}_{n\geq0}$ is a
basis of $\mathbb{P}^{\ast},$ we can write%
\[
\mathbf{v}_{n}^{\left(  1\right)  }=%
{\displaystyle\sum\limits_{k=0}^{\infty}}
a_{n,k}\mathbf{u}_{n}.
\]
Using (\ref{coherent}) and (\ref{dual def}), we get%
\[
a_{n,k}=\mathbf{v}_{n}^{\left(  1\right)  }\left[  Q_{k}\right]
=\mathbf{v}_{n}^{\left(  1\right)  }\left[  P_{k}^{\left(  1\right)  }%
-\tau_{k}P_{k-1}^{\left(  1\right)  }\right]  =\delta_{n,k}-\tau_{k}%
\delta_{n,k-1},
\]
and therefore%
\begin{equation}
\mathbf{v}_{n}^{\left(  1\right)  }=\mathbf{u}_{n}-\tau_{n+1}\mathbf{u}%
_{n+1},\quad n\in\mathbb{N}_{0}. \label{v1}%
\end{equation}

If we write%
\[
\Delta^{\ast}\mathbf{u}_{n}=%
{\displaystyle\sum\limits_{k=0}^{\infty}}
b_{n,k}\mathbf{v}_{k}^{\left(  0\right)  },
\]
then%
\[
b_{n,k}=\Delta^{\ast}\mathbf{u}_{n}\left[  P_{k}^{\left(  0\right)  }\right]
=-\mathbf{v}_{n}^{\left(  0\right)  }\left[  \Delta P_{k}^{\left(  0\right)
}\right]  =-k\mathbf{v}_{n}^{\left(  0\right)  }\left[  Q_{k-1}\right]
=-k\delta_{n,k-1},
\]
and, as a consequence,
\begin{equation}
\Delta^{\ast}\mathbf{u}_{n}=-\left(  n+1\right)  \mathbf{v}_{n+1}^{\left(
0\right)  }. \label{Dvo}%
\end{equation}
Using (\ref{Dvo}) in (\ref{v1}), we obtain (\ref{D v1}).
\end{proof}

With the help of Proposition \ref{Prop3}, we can prove a characterization of
$\Delta-$coherent pairs of the second kind.

\begin{theorem}
\label{Th1}The following are equivalent:

(1) $\left\{  L_{0},L_{1}\right\}  $ is a $\Delta\mathrm{c}2$.

(2) There exist $\Lambda_{2},\Lambda_{3}\in\mathbb{P}$ defined by
\begin{equation}
\Lambda_{k}\left(  x\right)  =%
{\displaystyle\sum\limits_{i=0}^{k}}
\lambda_{i}^{\left(  k\right)  }x^{i},\quad k=2,3, \label{Lambda def}%
\end{equation}
and satisfying%
\begin{equation}
\lambda_{2}^{\left(  2\right)  }+\left(  n-1\right)  \lambda_{3}^{\left(
3\right)  }\neq0,\quad n\in\mathbb{N}, \label{admis}%
\end{equation}
such that
\begin{equation}
\Delta^{\ast}L_{1}=\Lambda_{2}L_{0},\quad L_{1}=\Lambda_{3}L_{0}.
\label{L0, L1}%
\end{equation}

\end{theorem}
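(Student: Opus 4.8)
\emph{Plan.} I will prove both implications through one computation on the dual functionals $\mathbf{v}_n^{(i)}$ of (\ref{vn def}), the recurring technical point being that $\Delta^{\ast}$ does not commute with multiplication by polynomials: besides the product rule (\ref{diff prod}) I will repeatedly use the operator identity $\Delta^{\ast}(x\mathbf{u})=(x-1)\Delta^{\ast}\mathbf{u}+\mathbf{u}$ for $\mathbf{u}\in\mathbb{P}^{\ast}$ (immediate from (\ref{adj}), (\ref{mult}) and $p(x+1)=p(x)+\Delta p(x)$), its scalar companion $P_{n+1}^{(0)}(x+1)=P_{n+1}^{(0)}(x)+\Delta P_{n+1}^{(0)}(x)$, and the three-term recurrence $xP_n^{(i)}=P_{n+1}^{(i)}+\beta_n^{(i)}P_n^{(i)}+\gamma_n^{(i)}P_{n-1}^{(i)}$ of the monic orthogonal polynomials of a quasi-definite functional (with $\gamma_n^{(i)}\neq0$).

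\emph{A computation valid whenever (\ref{L0, L1}) holds.} Suppose $\Delta^{\ast}L_1=\Lambda_2L_0$ and $L_1=\Lambda_3L_0$ with $\deg\Lambda_2\le2$, $\deg\Lambda_3\le3$, and set $Q_n=\frac{1}{n+1}\Delta P_{n+1}^{(0)}$. The first claim is $L_1[Q_nP_k^{(1)}]=0$ for $0\le k\le n-2$: writing $L_1[Q_nP_k^{(1)}]=\frac{1}{n+1}L_1[P_k^{(1)}\Delta P_{n+1}^{(0)}]$, using (\ref{diff prod}) in the form $P_k^{(1)}\Delta P_{n+1}^{(0)}=\Delta(P_k^{(1)}P_{n+1}^{(0)})-P_{n+1}^{(0)}(x+1)\Delta P_k^{(1)}$, applying $L_1$, and expanding $P_{n+1}^{(0)}(x+1)=P_{n+1}^{(0)}+\Delta P_{n+1}^{(0)}$, the $\Delta(\cdot)$ term equals $-L_0[\Lambda_2P_k^{(1)}P_{n+1}^{(0)}]=0$ since $\deg(\Lambda_2P_k^{(1)})\le k+2\le n$ and $\{P_m^{(0)}\}$ is $L_0$-orthogonal, the term $-L_0[\Lambda_3P_{n+1}^{(0)}\Delta P_k^{(1)}]=0$ for the same reason since $\deg(\Lambda_3\Delta P_k^{(1)})\le k+2\le n$, and one is left with $L_1[Q_nP_k^{(1)}]=-L_1[Q_n\Delta P_k^{(1)}]$; as $\Delta P_k^{(1)}$ is a combination of $P_0^{(1)},\dots,P_{k-1}^{(1)}$, induction on $k$ (base case $k=0$, where $\Delta P_0^{(1)}=0$) gives the claim. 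Hence $Q_n-P_n^{(1)}$, of degree $\le n-1$ and $L_1$-orthogonal to $P_0^{(1)},\dots,P_{n-2}^{(1)}$, is a scalar multiple of $P_{n-1}^{(1)}$, so $Q_n=P_n^{(1)}-\tau_nP_{n-1}^{(1)}$ for scalars $\tau_n$ ($n\ge1$; also true for $n=0,1$ by degree count). Pairing with $P_{n-1}^{(1)}$ gives $L_1[Q_nP_{n-1}^{(1)}]=-\tau_nh_{n-1}^{(1)}$; rerunning the same manipulation with $k=n-1$, this time keeping the two leading terms (now of degree $\le n+1$) and matching leading coefficients against the monic $P_{n+1}^{(0)}$, one obtains $L_1[P_{n-1}^{(1)}\Delta P_{n+1}^{(0)}]=-(\lambda_2^{(2)}+(n-1)\lambda_3^{(3)})h_{n+1}^{(0)}$, hence
\[
\tau_n=\frac{\lambda_2^{(2)}+(n-1)\lambda_3^{(3)}}{n+1}\cdot\frac{h_{n+1}^{(0)}}{h_{n-1}^{(1)}},\qquad n\in\mathbb{N}.
\]
In particular this proves $(2)\Rightarrow(1)$: under (\ref{admis}) every $\tau_n$ with $n\ge1$ is nonzero, so $Q_n=P_n^{(1)}-\tau_nP_{n-1}^{(1)}$ is exactly (\ref{coherent}).

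\emph{$(1)\Rightarrow(2)$.} Assume (\ref{coherent}). Since $P_0^{(1)}=1$ we have $\mathbf{v}_0^{(1)}=(h_0^{(1)})^{-1}L_1$, so evaluating (\ref{D v1}) at $n=0$ and using $\mathbf{v}_j^{(0)}=(h_j^{(0)})^{-1}P_j^{(0)}L_0$ gives $\Delta^{\ast}L_1=\Lambda_2L_0$ with $\Lambda_2=h_0^{(1)}\bigl(\tfrac{2\tau_1}{h_2^{(0)}}P_2^{(0)}-\tfrac{1}{h_1^{(0)}}P_1^{(0)}\bigr)$, of degree $2$ with $\lambda_2^{(2)}=2\tau_1h_0^{(1)}/h_2^{(0)}\neq0$. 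Transporting the recurrence of $\{P_n^{(1)}\}$ to the dual side gives $\mathbf{v}_1^{(1)}=(\gamma_1^{(1)})^{-1}(x-\beta_0^{(1)})\mathbf{v}_0^{(1)}$; applying $\Delta^{\ast}$ via the identity above and substituting (\ref{D v1}) at $n=0$ and $n=1$, then solving for $\mathbf{v}_0^{(1)}$ and eliminating $(x-1-\beta_0^{(1)})\mathbf{v}_j^{(0)}$ by the recurrence of $\{P_n^{(0)}\}$, expresses $\mathbf{v}_0^{(1)}$ as a linear combination of $\mathbf{v}_0^{(0)},\dots,\mathbf{v}_3^{(0)}$; multiplying by $h_0^{(1)}$ yields $L_1=\Lambda_3L_0$ with $\deg\Lambda_3\le3$. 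Now the computation of the previous paragraph applies, and the $\tau_n$ it produces must coincide with the one in (\ref{coherent}), which is nonzero for all $n\ge1$; therefore $\lambda_2^{(2)}+(n-1)\lambda_3^{(3)}\neq0$ for $n\in\mathbb{N}$, i.e.\ (\ref{admis}) holds. This completes the plan.

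\textbf{Main obstacle.} The delicate point is exactly that $\Delta^{\ast}$ and multiplication by polynomials do not commute, so one must track the argument shifts $x\mapsto x+1$ (handled by $f(x+1)=f(x)+\Delta f(x)$) and keep scrupulous account of which terms survive after invoking orthogonality; in particular, isolating the leading-coefficient combination $\lambda_2^{(2)}+(n-1)\lambda_3^{(3)}$ — the quantity linking the coherence coefficients $\tau_n$ to the admissibility condition (\ref{admis}) — is the crux of the argument.
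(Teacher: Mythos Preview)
Your proof is correct, and the forward direction $(1)\Rightarrow(2)$ is essentially the paper's argument: both apply (\ref{D v1}) at $n=0$ to produce $\Lambda_2$, then at $n=1$ to produce $\Lambda_3$ (you via the dual three-term recurrence $\mathbf{v}_1^{(1)}=(\gamma_1^{(1)})^{-1}(x-\beta_0^{(1)})\mathbf{v}_0^{(1)}$, the paper via $\mathbf{v}_1^{(1)}=(h_1^{(1)})^{-1}P_1^{(1)}L_1$ and the $\Delta^{\ast}$ product rule --- equivalent moves). You even close a small gap the paper leaves tacit, namely that (\ref{admis}) must hold in $(1)\Rightarrow(2)$; you recover it by feeding the constructed $\Lambda_2,\Lambda_3$ back through the $\tau_n$ formula.

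The converse $(2)\Rightarrow(1)$ is where you genuinely diverge. The paper splits $L_1[P_k^{(1)}\Delta P_{n+1}^{(0)}]$ into the same three pieces you do, but to kill the cross term $L_0[\Lambda_3\,\Delta P_{n+1}^{(0)}\,\Delta P_k^{(1)}]$ it first observes that (\ref{L0, L1}) forces $\Delta^{\ast}(\Lambda_3L_0)=\Lambda_2L_0$, so $L_0$ is $\Delta$-semiclassical of class $\le1$, and then invokes Proposition~\ref{Prop2} (the quasi-orthogonality relation (\ref{ortho1})). You bypass this entirely: keeping the cross term as $L_1[Q_n\,\Delta P_k^{(1)}]$ and noting $\Delta P_k^{(1)}\in\mathrm{span}\{P_0^{(1)},\dots,P_{k-1}^{(1)}\}$, you get $L_1[Q_nP_k^{(1)}]=-L_1[Q_n\,\Delta P_k^{(1)}]$ and close by induction on $k$. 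This is more elementary and self-contained --- it does not need the semiclassical machinery of Section~2 --- while the paper's route makes the structural fact (that $L_0$ is semiclassical of class $\le1$) explicit, which is what drives the later classification. Both approaches land on the same formula (\ref{tau}) for $\tau_n$.
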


\begin{proof}
(1) $\Rightarrow$ (2) Setting $n=0$ in (\ref{D v1}), we have%
\[
\frac{1}{h_{0}^{\left(  1\right)  }}\Delta^{\ast}L_{1}=2\tau_{1}\frac
{P_{2}^{\left(  0\right)  }}{h_{2}^{\left(  0\right)  }}L_{0}-\frac
{P_{1}^{\left(  0\right)  }}{h_{1}^{\left(  0\right)  }}L_{0},
\]
and defining
\[
\Lambda_{2}\left(  x\right)  =\frac{2\tau_{1}h_{0}^{\left(  1\right)  }}%
{h_{2}^{\left(  0\right)  }}P_{2}^{\left(  0\right)  }\left(  x\right)
-\frac{h_{0}^{\left(  1\right)  }}{h_{1}^{\left(  0\right)  }}P_{1}^{\left(
0\right)  }\left(  x\right)  ,
\]
we get $\Delta^{\ast}L_{1}=\Lambda_{2}L_{0}.$ Since $\tau_{1}\neq0,$ we see
that $\deg\left(  \Lambda_{2}\right)  =2.$

Similarly, setting $n=1$ in (\ref{D v1}) gives%
\[
\frac{1}{h_{1}^{\left(  1\right)  }}\Delta^{\ast}\left(  P_{1}^{\left(
1\right)  }L_{1}\right)  =3\tau_{2}\frac{P_{3}^{\left(  0\right)  }}%
{h_{3}^{\left(  0\right)  }}L_{0}-2\frac{P_{2}^{\left(  0\right)  }}%
{h_{2}^{\left(  0\right)  }}L_{0}%
\]
and using (\ref{diff prod}) we have%
\[
\Delta^{\ast}P_{1}^{\left(  1\right)  }L_{1}+P_{1}^{\left(  1\right)  }%
\Delta^{\ast}L_{1}+\Delta^{\ast}P_{1}^{\left(  1\right)  }\Delta^{\ast}%
L_{1}=\left(  \frac{3\tau_{2}h_{1}^{\left(  1\right)  }}{h_{3}^{\left(
0\right)  }}P_{3}^{\left(  0\right)  }-\frac{2h_{1}^{\left(  1\right)  }%
}{h_{2}^{\left(  0\right)  }}P_{2}^{\left(  0\right)  }\right)  L_{0}.
\]
Since $\Delta^{\ast}P_{1}^{\left(  1\right)  }=1$ and $\Delta^{\ast}%
L_{1}=\Lambda_{2}L_{0},$ we conclude that $L_{1}=\Lambda_{3}L_{0}$ with%
\[
\Lambda_{3}\left(  x\right)  =\frac{3\tau_{2}h_{1}^{\left(  1\right)  }}%
{h_{3}^{\left(  0\right)  }}P_{3}^{\left(  0\right)  }\left(  x\right)
-\frac{2h_{1}^{\left(  1\right)  }}{h_{2}^{\left(  0\right)  }}P_{2}^{\left(
0\right)  }\left(  x\right)  -\Lambda_{2}\left(  x\right)  \left[
P_{1}^{\left(  1\right)  }\left(  x\right)  +1\right]  .
\]
Note that $\deg\left(  \Lambda_{3}\right)  \leq3.$

(2) $\Rightarrow$ (1) Since the polynomial sequence $\{P_{n}^{\left(
1\right)  }\left(  x\right)  \}_{n\geq0} $ is a basis of $\mathbb{P}$, we
have
\[
Q_{n}(x)=P_{n}^{\left(  1\right)  }+%
{\displaystyle\sum\limits_{k=0}^{n-1}}
c_{n,k}P_{k}^{\left(  1\right)  } (x).
\]
Using orthogonality and (\ref{diff prod}), we get%
\begin{align*}
&  \left(  n+1\right)  h_{k}^{\left(  1\right)  }c_{n,k}=\left(  n+1\right)
L_{1}\left[  Q_{n}P_{k}^{\left(  1\right)  }\right]  =L_{1}\left[  \Delta
P_{n+1}^{\left(  0\right)  }P_{k}^{\left(  1\right)  }\right] \\
&  =L_{1}\left[  \Delta\left(  P_{n+1}^{\left(  0\right)  }P_{k}^{\left(
1\right)  }\right)  \right]  -L_{1}\left[  P_{n+1}^{\left(  0\right)  }\Delta
P_{k}^{\left(  1\right)  }\right]  -L_{1}\left[  \Delta P_{n+1}^{\left(
0\right)  }\Delta P_{k}^{\left(  1\right)  }\right] \\
&  =-L_{0}\left[  \Lambda_{2}P_{n+1}^{\left(  0\right)  }P_{k}^{\left(
1\right)  }\right]  -L_{0}\left[  \Lambda_{3}P_{n+1}^{\left(  0\right)
}\Delta P_{k}^{\left(  1\right)  }\right]  -L_{0}\left[  \Lambda_{3}\Delta
P_{n+1}^{\left(  0\right)  }\Delta P_{k}^{\left(  1\right)  }\right]  ,
\end{align*}
and hence for all $0\leq k\leq n-1$
\begin{align*}
L_{0}\left[  \Lambda_{2}P_{n+1}^{\left(  0\right)  }P_{k}^{\left(  1\right)
}\right]   &  =\lambda_{2}^{\left(  2\right)  }h_{n+1}^{\left(  0\right)
}\delta_{k,n-1},\\
L_{0}\left[  \Lambda_{3}P_{n+1}^{\left(  0\right)  }\Delta P_{k}^{\left(
1\right)  }\right]   &  =\left(  n-1\right)  \lambda_{3}^{\left(  3\right)
}h_{n+1}^{\left(  0\right)  }\delta_{k,n-1}.
\end{align*}

From (\ref{L0, L1}) it follows that
\[
\Delta^{\ast}\left(  \Lambda_{3}L_{0}\right)  =\Lambda_{2}L_{0}.
\]
Therefore $L_{0}$ is $\Delta-$semiclassical of class at most $1.$ Using
(\ref{ortho1}), we conclude that%
\[
L_{0}\left[  \Lambda_{3}\Delta P_{n+1}^{\left(  0\right)  }\Delta
P_{k}^{\left(  1\right)  }\right]  =0,\quad k<n.
\]
Thus, for all $0\leq k\leq n-1$%
\[
\left(  n+1\right)  h_{k}^{\left(  1\right)  }c_{n,k}=-\left[  \lambda
_{2}^{\left(  2\right)  }+\left(  n-1\right)  \lambda_{3}^{\left(  3\right)
}\right]  h_{n+1}^{\left(  0\right)  }\delta_{k,n-1},
\]
and we obtain%
\[
Q_{n} (x)=P_{n}^{\left(  1\right)  } (x)-\tau_{n}P_{n-1}^{\left(  1\right)
}(x),
\]
where%
\begin{equation}
\tau_{n}=\frac{\lambda_{2}^{\left(  2\right)  }+\left(  n-1\right)
\lambda_{3}^{\left(  3\right)  }}{n+1}\frac{h_{n+1}^{\left(  0\right)  }%
}{h_{n-1}^{\left(  1\right)  }},\quad n\in\mathbb{N}. \label{tau}%
\end{equation}
Since the polynomials $\Lambda_{2},\Lambda_{3}$ satisfy (\ref{admis}), we see
that $\tau_{n}\neq0.$
\end{proof}

\section{Classification}

In this section, we will find all $\Delta-$coherent pairs of the second kind.
In view of Theorem \ref{Th1}, it is enough to consider discrete semiclassical
functionals of class $s\leq1$ satisfying the Pearson equation%
\begin{equation}
\Delta^{\ast}\left(  \Lambda_{3}L_{0}\right)  =\Lambda_{2}L_{0},\quad
\deg\left(  \Lambda_{2}\right)  =2,\quad\deg\left(  \Lambda_{3}\right)  \leq3.
\label{Pearson2}%
\end{equation}

In \cite{MR3227440}, we classified the discrete semiclassical functionals of
class $s\leq1$ satisfying (\ref{Pearson rho}), and obtained the following cases:

\begin{enumerate}
\item Charlier (class $s=0)$%
\begin{equation}
\rho\left(  x\right)  =\frac{z^{x}}{x!}, \label{Charlier}%
\end{equation}%
\begin{equation}
\phi\left(  x\right)  =1,\quad\psi\left(  x\right)  =\frac{x}{z}-1.
\label{C pearson}%
\end{equation}

\item Meixner (class $s=0)$%
\begin{equation}
\rho\left(  x\right)  =\left(  a\right)  _{x}\frac{z^{x}}{x!}, \label{Meixner}%
\end{equation}%
\begin{equation}
\phi\left(  x\right)  =x+a,\quad\psi\left(  x\right)  =\frac{x}{z}-\left(
x+a\right)  . \label{M Pearson}%
\end{equation}

Subcase: Kravchuk polynomials
\begin{equation}
\rho\left(  x\right)  =\left(  -N\right)  _{x}\frac{z^{x}}{x!},\quad
N\in\mathbb{N}, \label{Kravchouk}%
\end{equation}%
\begin{equation}
\phi\left(  x\right)  =x-N,\quad\psi\left(  x\right)  =\frac{x}{z}-\left(
x-N\right)  . \label{Krav Pearson}%
\end{equation}

\item Hahn (class $s=0)$%
\begin{equation}
\rho\left(  x\right)  =\frac{\left(  -N\right)  _{x}\left(  a\right)  _{x}%
}{\left(  b+1\right)  _{x}}\frac{1}{x!},\quad N\in\mathbb{N}, \label{Hahn}%
\end{equation}%
\begin{equation}
\phi\left(  x\right)  =\left(  x-N\right)  \left(  x+a\right)  ,\quad
\psi\left(  x\right)  =\left(  N-a+b\right)  x+aN . \label{Hahn Pearson}%
\end{equation}

\item Generalized Charlier (class $s=1)$%
\begin{equation}
\rho\left(  x\right)  =\frac{1}{\left(  b+1\right)  _{x}}\frac{z^{x}}{x!},
\label{Gen Charlier}%
\end{equation}%
\begin{equation}
\phi\left(  x\right)  =1,\quad\psi\left(  x\right)  =\frac{x\left(
x+b\right)  }{z}-1. \label{GC pearson}%
\end{equation}

\item Generalized Meixner (class $s=1)$%
\begin{equation}
\rho\left(  x\right)  =\frac{\left(  a\right)  _{x}}{\left(  b+1\right)  _{x}%
}\frac{z^{x}}{x!}, \label{Gen Meixner}%
\end{equation}%
\begin{equation}
\phi\left(  x\right)  =x+a,\quad\psi\left(  x\right)  =\frac{x\left(
x+b\right)  }{z}-\left(  x+a\right)  . \label{GM Pearson}%
\end{equation}

\item Generalized Kravchuk (class $s=1)$%
\begin{equation}
\rho\left(  x\right)  =\left(  -N\right)  _{x}\left(  a\right)  _{x}%
\frac{z^{x}}{x!},\quad N\in\mathbb{N}, \label{Gen Krav}%
\end{equation}%
\begin{equation}
\phi\left(  x\right)  =\left(  x-N\right)  \left(  x+a\right)  ,\quad
\psi\left(  x\right)  =\frac{x}{z}-\left(  x-N\right)  \left(  x+a\right)  .
\label{Gen Krav Pearson}%
\end{equation}

\item Generalized Hahn of type I (class $s=1)$%
\begin{equation}
\rho\left(  x\right)  =\frac{\left(  a_{1}\right)  _{x}\left(  a_{2}\right)
_{x}}{\left(  b+1\right)  _{x}}\frac{z^{x}}{x!}, \label{G Hahn}%
\end{equation}%
\begin{equation}
\phi\left(  x\right)  =\left(  x+a_{1}\right)  \left(  x+a_{2}\right)
,\quad\psi\left(  x\right)  =\frac{x\left(  x+b\right)  }{z}-\left(
x+a_{1}\right)  \left(  x+a_{2}\right)  . \label{G Hahn Pearson}%
\end{equation}
Note that if we set $z=1,a_{1}=a,a_{2}=-N,$ then we obtain the Hahn polynomials.

\item Generalized Hahn of type II (class $s=1)$%
\begin{equation}
\rho\left(  x\right)  =\frac{\left(  -N\right)  _{x}\left(  a_{1}\right)
_{x}\left(  a_{2}\right)  _{x}}{\left(  b_{1}+1\right)  _{x}\left(
b_{2}+1\right)  _{x}}\frac{1}{x!},\quad N\in\mathbb{N} , \label{G Hanh II}%
\end{equation}%
\begin{align}
\phi\left(  x\right)   &  =\left(  x-N\right)  \left(  x+a_{1}\right)  \left(
x+a_{2}\right)  ,\label{G Hahn II Pearson}\\
\psi\left(  x\right)   &  =\left(  N-a_{1}-a_{2}+b_{1}+b_{2}\right)
x^{2}+\left(  Na_{1}+Na_{2}-a_{1}a_{2}+b_{1}b_{2}\right)  \allowbreak
x+Na_{1}a_{2}\nonumber
\end{align}

\end{enumerate}

If we define $L_{0},L_{1}\in\mathbb{P}^{\ast}$ by%
\[
L_{i}\left[  p\right]  =%
{\displaystyle\sum\limits_{x=0}^{\infty}}
p\left(  x\right)  \rho_{i}\left(  x\right)  ,\quad i=0,1,\quad p\in
\mathbb{P},
\]
with $\rho_{i}\left(  -1\right)  =0,$ $i=0,1,$ then we can use (\ref{Prop1})
and rewrite (\ref{L0, L1}) as%
\begin{equation}
\nabla\left(  \Lambda_{3}\rho_{0}\right)  +\Lambda_{2}\rho_{0}=0,\quad\rho
_{1}=\Lambda_{3}\rho_{0}. \label{r0, r1}%
\end{equation}

\begin{proposition}
\label{Prop I}Let $q,\phi_{0},\psi_{0}\in\mathbb{P}.$ If $L_{0}\in
\mathbb{P}^{\ast}$ satisfies%
\begin{equation}
\nabla\left(  \phi_{0}\rho_{0}\right)  +\psi_{0}\rho_{0}=0, \label{Pearson3}%
\end{equation}
and%
\begin{equation}
\Lambda_{2}=\left(  q-\nabla q\right)  \psi_{0}-\phi_{0}\nabla q,\quad
\Lambda_{3}=q\phi_{0}, \label{L2, L3}%
\end{equation}
then%
\begin{equation}
\nabla\left(  \Lambda_{3}\rho_{0}\right)  +\Lambda_{2}\rho_{0}=0.
\label{Pearson4}%
\end{equation}

\end{proposition}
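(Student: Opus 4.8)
The plan is to establish (\ref{Pearson4}) by a short direct computation: expand $\nabla\left(\Lambda_{3}\rho_{0}\right)$ with the product rule for the backward difference operator, insert the Pearson equation (\ref{Pearson3}), and recognize the outcome as $-\Lambda_{2}\rho_{0}$.

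First I would record the product rule
\[
\nabla\left(fg\right)\left(x\right)=g\left(x\right)\nabla f\left(x\right)+f\left(x-1\right)\nabla g\left(x\right),
\]
which follows immediately from $f(x)g(x)-f(x-1)g(x-1)=g(x)\left[f(x)-f(x-1)\right]+f(x-1)\left[g(x)-g(x-1)\right]$. Applying it with $f=q$ and $g=\phi_{0}\rho_{0}$ gives
\[
\nabla\left(\Lambda_{3}\rho_{0}\right)=\nabla\left(q\,\phi_{0}\rho_{0}\right)=\left(\phi_{0}\rho_{0}\right)\nabla q+q\left(x-1\right)\nabla\left(\phi_{0}\rho_{0}\right).
\]
By hypothesis (\ref{Pearson3}) we have $\nabla\left(\phi_{0}\rho_{0}\right)=-\psi_{0}\rho_{0}$, and the one-step shift $q\left(x-1\right)=q-\nabla q$ (immediate from the definition of $\nabla$) turns the right-hand side into $\left[\phi_{0}\nabla q-\left(q-\nabla q\right)\psi_{0}\right]\rho_{0}$. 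Comparing with the definition (\ref{L2, L3}) of $\Lambda_{2}$, this equals $-\Lambda_{2}\rho_{0}$, which is exactly (\ref{Pearson4}).

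There is essentially no obstacle here: the statement is a bookkeeping identity, and the only point requiring care is choosing the form of the product rule for $\nabla$ that isolates $\nabla q$ without a shift on it, together with the correct handling of $q(x-1)$. One could equivalently avoid the product rule and evaluate $\Lambda_{3}\rho_{0}$ directly at $x$ and at $x-1$, using (\ref{Pearson3}) in the form $\left(\phi_{0}\rho_{0}\right)(x-1)=\left(\phi_{0}\rho_{0}\right)(x)+\psi_{0}(x)\rho_{0}(x)$. The real content is the recipe the proposition provides: from any discrete semiclassical weight $\rho_{0}$ with Pearson data $(\phi_{0},\psi_{0})$ and an arbitrary polynomial factor $q$, formulas (\ref{L2, L3}) manufacture a pair $(\Lambda_{2},\Lambda_{3})$ solving (\ref{r0, r1}), which is what drives the case analysis in Section 4.
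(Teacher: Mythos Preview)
Your proof is correct and follows essentially the same approach as the paper. The only cosmetic difference is that the paper writes the product rule in the symmetric form $\nabla(fg)=g\nabla f+f\nabla g-\nabla f\nabla g$, whereas you use the shifted form $\nabla(fg)=g\nabla f+f(x-1)\nabla g$; since $f(x-1)=f-\nabla f$, the two computations coincide line by line.
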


\begin{proof}
If we use the identity%
\begin{equation}
\nabla\left(  fg\right)  =g\nabla f+f\nabla g-\nabla f\nabla g,
\label{diff prod 1}%
\end{equation}
and (\ref{L2, L3}), we have%
\[
\nabla\left(  \Lambda_{3}\rho_{0}\right)  =\left(  \nabla q\right)  \phi
_{0}\rho_{0}+\left(  q-\nabla q\right)  \nabla\left(  \phi_{0}\rho_{0}\right)
.
\]
Using (\ref{Pearson3}), we get
\[
\nabla\left(  \Lambda_{3}\rho_{0}\right)  =\left(  \nabla q\right)  \phi
_{0}\rho_{0}-\left(  q-\nabla q\right)  \psi_{0}\rho_{0}=-\Lambda_{2}\rho
_{0},
\]
and (\ref{Pearson4}) follows.
\end{proof}

We will now find all $\Delta\mathrm{c}2$ pairs with $\deg\left(  \Lambda
_{2}\right)  =2.$

\subsection{Case I: $\deg\left(  \Lambda_{3}\right)  =0$}

If we take $\Lambda_{3}=1,$ we see from (\ref{r0, r1}) that%
\[
\nabla\rho_{0}+\Lambda_{2}\rho_{0}=0,\quad\rho_{1}=\rho_{0},
\]
and if $\rho_{0}\left(  x\right)  $ is the weight function corresponding to
the generalized Charlier polynomials (\ref{Gen Charlier}), we see from
(\ref{GC pearson}) that$\quad$%
\[
\Lambda_{2}\left(  x\right)  =\frac{x\left(  x+b\right)  }{z}-1,\quad z\neq0.
\]
Thus, the generalized Charlier polynomials are \emph{self-coherent }of the
second kind\emph{. }Note that this was already observed in \cite{Almeria}.

\subsection{Case II (a): $\deg\left(  \Lambda_{3}\right)  =1$}

If we take $\Lambda_{3}=x+\omega,$ $\omega\in\mathbb{C},$ then we see from
(\ref{r0, r1}) that%
\[
\nabla\left[  \left(  x+\omega\right)  \rho_{0}\right]  +\Lambda_{2}\rho
_{0}=0,\quad\rho_{1}=\left(  x+\omega\right)  \rho.
\]
If $\rho_{0}\left(  x\right)  $ is the weight function corresponding to the
Charlier polynomials (\ref{Charlier}), we can use Proposition \ref{Prop I}
with $q=x+\omega$ and obtain%
\[
\Lambda_{2}\left(  x\right)  =\frac{x}{z}\left(  x+\omega-1\right)  -\left(
x+\omega\right)  ,\quad z\neq0,
\]
where we have also used (\ref{C pearson}). The functional $L_{1}$ is a
\emph{Christoffel transformation} of $L_{0}$ satisfying%
\[
\nabla\left[  \left(  x+\omega\right)  \rho_{1}\right]  +\Lambda_{2}\rho
_{1}=0,
\]
and using (\ref{GM Pearson}) we see that $\rho_{1}\left(  x\right)  $ is the
weight function corresponding to the generalized Meixner polynomials
(\ref{Gen Meixner}) with%
\[
a=\omega,\quad b=\omega-1.
\]

\subsection{Case II (b): $\deg\left(  \Lambda_{3}\right)  =1$}

If we take $\Lambda_{3}=x+\omega,$ $\omega\in\mathbb{C},$ then from
(\ref{r0, r1}) we get that%
\[
\nabla\left[  \left(  x+\omega\right)  \rho_{0}\right]  +\Lambda_{2}\rho
_{0}=0,\quad\rho_{1}=\left(  x+\omega\right)  \rho.
\]
If $\rho_{0}\left(  x\right)  $ is the weight function corresponding to the
Kravchuk polynomials (\ref{Kravchouk}), we can use Proposition \ref{Prop I}
with $q=x+\omega$ and obtain%
\[
\Lambda_{2}\left(  x\right)  =\frac{x}{z}\left(  x+\omega-1\right)  -\left(
x+\omega\right)  \left(  x-N\right)  ,\quad z\neq0,
\]
were we have also used (\ref{Krav Pearson}). The functional $L_{1}$ is a
Christoffel transformation of $L_{0}$ satisfying%
\[
\nabla\left[  \left(  x+\omega\right)  \rho_{1}\right]  +\Lambda_{2}\rho
_{1}=0,
\]
and using (\ref{Gen Krav Pearson}) we see that $\rho_{1}\left(  x\right)  $ is
the weight function corresponding to the generalized Kravchuk polynomials
(\ref{Gen Krav}) with%
\[
a=\omega,\quad b=\omega-1.
\]

\subsection{Case III: $\deg\left(  \Lambda_{3}\right)  =2$}

If we take $\Lambda_{3}=\left(  x+\omega\right)  \left(  x+a\right)  ,$
$\omega\in\mathbb{C},$ then from (\ref{r0, r1}) it follows%
\[
\nabla\left[  \left(  x+\omega\right)  \left(  x+a\right)  \rho_{0}\right]
+\Lambda_{2}\rho_{0}=0,\quad\rho_{1}=\left(  x+\omega\right)  \left(
x+a\right)  \rho_{0}.
\]
If $\rho_{0}\left(  x\right)  $ is the weight function corresponding to the
Meixner polynomials (\ref{Meixner}), we can use Proposition \ref{Prop I} with
$q=x+\omega$ and obtain%
\[
\Lambda_{2}\left(  x\right)  =\frac{x}{z}\left(  x+\omega-1\right)  -\left(
x+\omega\right)  \left(  x+a\right)  ,\quad z\neq0,
\]
were we have also used (\ref{M Pearson}). The functional $L_{1}$ is a
Christoffel transformation of $L_{0}$ satisfying%
\[
\nabla\left[  \left(  x+\omega\right)  \left(  x+a\right)  \rho_{1}\right]
+\Lambda_{2}\rho_{1}=0,
\]
and using (\ref{G Hahn Pearson}) we see that $\rho_{1}\left(  x\right)  $ is
the weight function corresponding to the generalized Hahn polynomials of type
I (\ref{G Hahn}) with%
\[
a_{1}=a,\quad a_{2}=\omega,\quad b=\omega-1.
\]

\subsection{Case IV: $\deg\left(  \Lambda_{3}\right)  =3$}

If we take $\Lambda_{3}=\left(  x+\omega\right)  \left(  x-N\right)  \left(
x+a\right)  ,$ $\omega\in\mathbb{C},$ $N\in\mathbb{N},$ from (\ref{r0, r1}) we
deduce that%
\begin{gather*}
\nabla\left[  \left(  x+\omega\right)  \left(  x-N\right)  \left(  x+a\right)
\rho_{0}\right]  +\Lambda_{2}\rho_{0}=0,\\
\rho_{1}=\left(  x+\omega\right)  \left(  x-N\right)  \left(  x+a\right)
\rho_{0}.
\end{gather*}
If $\rho_{0}\left(  x\right)  $ is the weight function corresponding to the
Hahn polynomials (\ref{Hahn}), we can use Proposition \ref{Prop I} with
$q=x+\omega$ and obtain%
\[
\Lambda_{2}=\allowbreak\left(  N-a+b-1\right)  x^{2}+\allowbreak\left(
N\omega+Na-a\omega+b\omega-b\right)  x+Na\omega,
\]
were we have also used (\ref{Hahn Pearson}). The functional $L_{1}$ is a
Christoffel transformation of $L_{0}$ satisfying%
\[
\nabla\left[  \left(  x+\omega\right)  \left(  x-N\right)  \left(  x+a\right)
\rho_{1}\right]  +\Lambda_{2}\rho_{1}=0,
\]
and using (\ref{G Hahn II Pearson}) we see that $\rho_{1}\left(  x\right)  $
is the weight function corresponding to the generalized Hahn polynomials of
type II (\ref{G Hanh II}) with%
\[
a_{1}=a,\quad a_{2}=\omega,\quad b_{1}=b,\quad b_{2}=\omega-1.
\]

Note that Christoffel transforms of $\Delta$-classical linear functionals have
been analyzed in \cite{MR1939588}.

\section{Conclusions and future directions}

We have classified all $\Delta$-coherent pairs of the second kind
$\{L_{0},L_{1}\}$ and derived the following results:%

\[%
\begin{tabular}
[c]{|l|l|}\hline
$L_{0}$ & $L_{1}$\\\hline
generalized Charlier & generalized Charlier\\\hline
Charlier & generalized Meixner\\\hline
Kravchuk & generalized Kravchuk\\\hline
Meixner & generalized Hahn of type I\\\hline
Hahn & generalized Hahn of type II\\\hline
\end{tabular}
.
\]
In all cases, the functional $L_{1}$ is a Christoffel transformation of
$L_{0},$ in agreement with the general results obtained by Meijer
\cite{MR1451509} for the continuous case.\newline

In a work in progress, we are dealing with analytic properties of discrete
Sobolev inner products associated with a pair of coherent pairs of the second
kind. We must point out that in \cite{MR2127867} the authors deal with
asymptotic properties and the location of zeros of discrete polynomials
associated with a Sobolev inner product
\[
\langle p,q\rangle_{\mathfrak{S}}=\langle u_{0},pq\rangle+\lambda\langle
u_{1},\Delta p\Delta q\rangle,
\]
where $\lambda\geq0$, $(u_{0},u_{1})$ is a $\Delta$-coherent pair of
positive-definite linear functionals, and $u_{1}$ is the Meixner linear
functional. A limit relation between these orthogonal polynomials and the
Laguerre-Sobolev orthogonal polynomials which is analogous to the one existing
between Meixner and Laguerre polynomials in the Askey scheme is deduced.
Notice that Meixner polynomials are $\Delta$ self-coherent, Thus, Mehler-Heine
type formulas and zeros of such polynomials when $u_{0}=u_{1}$ is the Meixner
functional have been studied in \cite{MR3319506}. Outer relative asymptotics
and Plancherel-Rotach asymptotics as well as limit relations are analyzed in
\cite{MR1741786}. Algebraic properties of such polynomials as well as the
behavior of their zeros appear in \cite{MR1752153}.

\section{Acknowledgements}

Diego Dominici would like to thank the Isaac Newton Institute for Mathematical
Sciences, Cambridge, for support and hospitality during the programme
"Applicable resurgent asymptotics: towards a universal theory" (ARA2). The
work was supported by EPSRC grant no EP/R014604/1.

Francisco Marcell\'an has been supported by FEDER/Ministerio de Ciencia e
Innovaci\'on-Agencia Estatal de Investigaci\'on of Spain, grant
PID2021-122154NB-I00, and the Madrid Government (Comunidad de Madrid-Spain)
under the Multiannual Agreement with UC3M in the line of Excellence of
University Professors, grant EPUC3M23 in the context of the V PRICIT (Regional
Program of Research and Technological Innovation).

\newif\ifabfull\abfullfalse\input apreambl

\end{document}